\documentclass{article}

\usepackage{authblk}
\usepackage{hyperref}
\usepackage{amsmath}
\usepackage{amsthm}
\usepackage{amssymb}
\usepackage{color}
\usepackage{tikz}
\usepackage{xcolor}
\usepackage{tkz-graph}
\usepackage{comment}
\usepackage{graphicx,float}
\usepackage{graphics}
\usepackage{enumitem}
\usepackage{mathtools}
\usepackage{multirow}
\usepackage[mathlines]{lineno}
\usepackage[normalem]{ulem}
\usepackage{ulem}  

\newtheorem{theorem}{Theorem}[section]
\newtheorem{lemma}[theorem]{Lemma}

\newtheorem{proposition}[theorem]{Proposition}
\newtheorem{remark}[theorem]{Remark}
\newtheorem{definition}[theorem]{Definition}

\newcommand{\be}{\begin{equation}}
\newcommand{\ee}{\end{equation}}

\title{Graph transformations and eigenvectors of the Laplacian, signless Laplacian and Adjacency matrices}

\author[1]{\normalsize{Jean-Guy Caputo }\thanks{jean-guy.caputo@insa-rouen.fr}}
\author[2]{\normalsize{Leonardo de Lima} \thanks{leonardo.delima@ufpr.br}}
\author[1]{\normalsize{Arnaud Knippel} \thanks{arnaud.knippel@insa-rouen.fr}}

\affil[1]{Laboratoire de Math\'ematiques, INSA Rouen Normandie, 76800 Saint-Etienne du Rouvray, France}
\affil[2]{Programa de P\'os-Gradua\c c\~ao em Matem\'atica, Universidade Federal do Paran\'a, Brazil}

\date{\ }

\begin{document}
\maketitle

\begin{abstract}
We study graph transformations and their effects on the eigenvectors of three fundamental graph matrices: the adjacency matrix $A$, the signless Laplacian $Q$, and the Laplacian $L$. We extend the Laplacian eigenvector edge and matching principles introduced by Merris to $Q$ and $A$. We also extend some of the known $L$ graph transformations to $Q$ and $A$ and prove that graphs admitting a common eigenvector $\mathbf{x}$ for $A, L$ and $Q$ are
such that vertices corresponding to nonzero components of $\mathbf{x}$ have same degree. Following our previous work for $L$, we investigate bivalent graphs for $A$ and $Q$, that is graphs admitting an eigenvector with all components in $\{-1, 1\}$. In particular, for trees, we characterize $Q$-bivalence and establish that $A$-bivalence forces all vertex degrees to be odd. 
\end{abstract}

{\bf Keywords:} \\
Adjacency matrix,  signless Laplacian matrix,  Laplacian matrix  ,  signless Laplacian eigenvector,  Bivalent graph \\
{\bf MSC classification 2020:} 05C50  ,  15A18

\tableofcontents

\section{Introduction}\label{intro}

Given a finite graph, important operators are the adjacency matrix $A$,
the graph Laplacian $L= D-A$ where $D$ is the { diagonal matrix of the vertex degrees of the graph} and the 
signless Laplacian $Q=D+A$.
The graph Laplacian is very important for applications in engineering
description of evolution problems
on graphs; an example is the graph wave equation, a natural model for 
miscible flows on a network, see the articles \cite{maas,cks13}.
This simple formalism proved very useful for modeling the electrical grid \cite{ckr19} 
or describing an epidemic on a geographical network \cite{ckmk19}.

The adjacency matrix has been historically more studied for its spectral properties. 
It also has many applications in physics, chemistry and computer science, see for example \cite{crs01}.
The signless Laplacian $Q$ has been the least studied for its spectral properties. It has recently attracted interest as an operator allowing faster convergence in quantum walks than the 
Laplacian \cite{mw25}.

Most results on eigenvalues/eigenvectors for $A,L,Q$
are inequalities, for the Laplacian see for example the { extensive} review by
Mohar \cite{mohar92} and the extensive monograph \cite{bls07}. The book \cite{crs01} studies in detail the spectrum of $A$ and the article \cite{CRS07} analyzes the one of $Q$.
A breakthrough occured in 1998 when Merris \cite{merris}, in a pioneering article,
introduced "Laplacian eigenvector principles" that  predict how the
spectrum of the Laplacian is affected by graph transformations (adding or deleting edges, coalescing vertices, etc). This opens the way to calculating eigenvalues and eigenvectors combinatorially rather than numerically, see  \cite{ck23} for a review of this approach.

In this article, we continue the work of Merris by extending some of his principles to the matrices $Q$ and $A$. We show that a number of principles still hold for $Q$ and $A$. {  We also examine graphs that have eigenvectors in the set $\{-1,1\}$ for the operators $L$, $Q$ or $A$, the so-called {\it $L$-bivalent graphs, $Q$-bivalent graphs or $A$-bivalent graphs}}.\\ 
{
The article is organized as follows.  We extend in Section 2 the results of Merris to $Q$ and $A$. Section 3 examines the case of bivalent graphs, that have an eigenvector of either $A$, $L$ or $Q$ with entries in $\{-1,1 \}$. Finally, in Section 4, we study the special case of bivalent trees. 
}

We introduce the following notations.
Let $G=(V,E)$ be a connected graph with vertex set $V=\{1,2,\ldots,n\}.$  The diagonal degree matrix is $D=diag(d_{1},\ldots,d_{n})$ with entries equal 
to the degree $d_i$ of each vertex $i \in V$.
The adjacency $A$ is such that { $A_{ij}=A_{ji}=1$} if $i \sim j$, and $A_{ij} = 0$ otherwise. 

Let $\mathbf{x} \in \mathbb{R}^{n}$. The quadratic form of $Q$ is given by 
\begin{equation}\label{qq}
    \mathbf{x^{T}} \, Q \, \mathbf{x} = \sum_{ij \in E} (x_i + x_j)^2.
\end{equation}
The matrix $Q$ is symmetric and from \eqref{qq}, note that it is semidefinite 
positive.

The quadratic form associated to the graph Laplacian is
\begin{equation}\label{ql}
    \mathbf{x^{T}} \, L \, \mathbf{x} = \sum_{ij \in E} (x_i - x_j)^2.
\end{equation}
Note the similarity between the quadratic forms of $L$ and $Q$. On the other hand, the quadratic form associated with the adjacency matrix $A$ is
\begin{equation}\label{qa}
    \mathbf{x^{T}} \, A \, \mathbf{x} = \sum_{ij \in E} 2 x_i x_j.
\end{equation}

Finally we recall the following notion of soft node \cite{cks13}.
\begin{definition}
A node $j$ of a graph is a soft node for an eigenvalue
$\lambda$ of the graph Laplacian if there exists an eigenvector
$\mathbf{v}$ for this eigenvalue such that $v_j = 0$.
\end{definition}

\section{Extension of Merris's principles to Q and A}

\subsection{Edge principle}

From Merris's foundational article \cite{merris}, we have the following {\it edge principle} for $L$.
\begin{theorem} 
Let $\lambda$ be an eigenvalue of $G$ for eigenvector $\mathbf{x}$ { for the matrix $L$.} If $x_i = x_j$, 
then $\lambda$ is an eigenvalue of $G^{\prime}$ for $\mathbf{x}$, where $G^{\prime}$ is
the graph obtained from G by deleting or adding $e = \{i, j\}$ 
depending on whether or not it is an edge of G.
\end{theorem}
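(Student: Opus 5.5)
The plan is to compare $L'$ with $L$ directly. Adding or deleting an edge changes the Laplacian by a rank-one matrix, so I will show that this perturbation annihilates $\mathbf{x}$ exactly when $x_i=x_j$.

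First, write $L' = L \pm \mathbf{b}\mathbf{b}^T$, where $\mathbf{b}=\mathbf{e}_i-\mathbf{e}_j$. The sign is $+$ if $e=\{i,j\}$ is added and $-$ if it is deleted. To justify this, note that adding the edge increases $d_i$ and $d_j$ by one, which accounts for the diagonal entries $+1$ of $\mathbf{b}\mathbf{b}^T$ at positions $(i,i)$ and $(j,j)$. It also puts a $1$ in $A_{ij}$ and $A_{ji}$, which gives the $-1$ entries of $\mathbf{b}\mathbf{b}^T$ at $(i,j)$ and $(j,i)$ through $L=D-A$. Deletion is the reverse change. The same identity can be read off the quadratic form \eqref{ql}, since the edge $ij$ contributes $(x_i-x_j)^2=(\mathbf{b}^T\mathbf{x})^2$.

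Next, compute
\[
L'\mathbf{x} = L\mathbf{x} \pm \mathbf{b}(\mathbf{b}^T\mathbf{x}) = \lambda\mathbf{x} \pm (x_i-x_j)\mathbf{b}.
\]
The hypothesis $x_i=x_j$ makes the second term vanish, so $L'\mathbf{x}=\lambda\mathbf{x}$. Because $\mathbf{x}\neq 0$, this shows that $\lambda$ is an eigenvalue of $G'$ with the same eigenvector $\mathbf{x}$.

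The only real point to check is the rank-one decomposition in the first step, in particular getting the signs of the degree and adjacency contributions right. Everything after that is immediate.
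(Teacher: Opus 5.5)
Your proof is correct: the identity $L' = L \pm \mathbf{b}\mathbf{b}^T$ with $\mathbf{b}=\mathbf{e}_i-\mathbf{e}_j$ holds, and $L'\mathbf{x}=\lambda\mathbf{x}\pm(x_i-x_j)\mathbf{b}$ gives the result. The paper does not prove this statement itself (it cites it from Merris), but its proofs of the $Q$ and $A$ analogues check the eigenvalue equation at the endpoints of the edge, which is exactly the entrywise content of your rank-one computation, so your approach is essentially the same.
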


The quadratic form of $Q$ in \eqref{qq} gives the intuition of a similar edge principle. We prove the following result using the eigenvalue equation.

\begin{theorem} 
Let $\lambda$ be an eigenvalue of G for eigenvector $\mathbf{x}$ for the matrix 
$Q$. If $x_i = -x_j$, 
then $\lambda $ is an eigenvalue of $G^{\prime}$ for $\mathbf{x}$, where $G^{\prime}$ is
the graph obtained from G by deleting or adding $e = \{i, j\}$ 
depending on whether or not it is an edge of G.
\end{theorem}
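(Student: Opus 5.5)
The plan is to verify the eigenvalue equation $Q'\mathbf{x}=\lambda\mathbf{x}$ directly, where $Q'$ is the signless Laplacian of $G'$. The key observation is that adding or deleting the edge $e=\{i,j\}$ changes $Q$ by a rank-one matrix supported on the two coordinates $i,j$. Write $\mathbf{b}=\mathbf{e}_i+\mathbf{e}_j$, with $\mathbf{e}_k$ the standard basis vectors. Adding $e$ increases $d_i$ and $d_j$ by one and sets $A_{ij}=A_{ji}=1$, so
\[
Q' = Q + \mathbf{b}\,\mathbf{b}^{T}.
\]
Deleting $e$ gives $Q'=Q-\mathbf{b}\,\mathbf{b}^{T}$ in the same way. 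In both cases we have $Q'=Q\pm\mathbf{b}\mathbf{b}^T$. This is the $Q$-analogue of the decomposition $L=\sum_{ij\in E}(\mathbf{e}_i-\mathbf{e}_j)(\mathbf{e}_i-\mathbf{e}_j)^T$ that underlies Merris's edge principle, and it matches the form of the quadratic form \eqref{qq}.

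Next, compute $Q'\mathbf{x}=Q\mathbf{x}\pm \mathbf{b}(\mathbf{b}^T\mathbf{x})=\lambda\mathbf{x}\pm (x_i+x_j)\mathbf{b}$. The hypothesis $x_i=-x_j$ gives $\mathbf{b}^T\mathbf{x}=x_i+x_j=0$, so $Q'\mathbf{x}=\lambda\mathbf{x}$. Since $\mathbf{x}\neq 0$, $\lambda$ is an eigenvalue of $Q'$ for $\mathbf{x}$.

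To stay close to the eigenvalue equation, as the paper announces, I would also check this row by row. Rows $k\notin\{i,j\}$ of $Q$ and $Q'$ coincide, so nothing changes there. In row $i$, adding the edge turns $d_ix_i+\sum_{k\sim i}x_k$ into $(d_i+1)x_i+\sum_{k\sim i}x_k+x_j$, which differs by $x_i+x_j=0$. Row $j$ is symmetric, and deletion is the same computation with signs reversed. There is no real obstacle; the only point needing care is the sign bookkeeping in the rank-one update, namely that both the degree and the adjacency entries change with the same sign for $Q$, unlike $L$.
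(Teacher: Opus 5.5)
Your proof is correct and is essentially the paper's argument: the paper checks the eigenvalue equation at vertex $i$ after adding $e$ and notes that the extra contribution $x_i + x_j$ vanishes, which is exactly the row-$i$ computation in your last paragraph. Your rank-one form $Q' = Q \pm \mathbf{b}\mathbf{b}^T$ with $\mathbf{b}=\mathbf{e}_i+\mathbf{e}_j$ is the matrix version of that computation. It handles row $j$, the unchanged rows and the deletion case all at once, whereas the paper leaves these implicit.
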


\begin{proof}
For a vertex $i$ we have 
$$d_i x_i + \sum_{k \sim i} x_k = \lambda x_i,$$
where $d_i$ is the degree of $i$.
Assume there is a vertex $j$ such that $x_i = -x_j$ and let us add the edge
$e = \{i, j\}$ to $G$ to form $G^{\prime}$. Then for $G'$ we have
$$(d_i+1) x_i + \sum_{k \sim i} x_k - x_i = d_i x_i + \sum_{k \sim i} x_k=\lambda x_i,$$
and the proof is complete.
\end{proof}

For matrix $A$, the quadratic form in \eqref{qa} seems to indicate that there is an edge principle when we have $x_i=0$ for some vertex $i$. The theorem below shows that the correct result is a bit more restrictive.

\begin{theorem} 
Let $\lambda$ be an eigenvalue of G for eigenvector $\mathbf{x}$ for the matrix 
$A$ and let two vertices $i$ and $j$ such that $x_i = x_j =0$, 
then $\lambda $ is an eigenvalue of $G^{\prime}$ for $\mathbf{x}$, where $G^{\prime}$ is
the graph obtained from G by deleting or adding $e = \{i, j\}$ 
depending on whether or not it is an edge of G.
\end{theorem}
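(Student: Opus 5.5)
The plan is to check the eigenvalue equation for $A$ vertex by vertex, as in the proof of the edge principle for $Q$ above. Write $A'$ for the adjacency matrix of $G'$. Then $A' = A \pm (\mathbf{e}_i\mathbf{e}_j^T + \mathbf{e}_j\mathbf{e}_i^T)$, with the plus sign when $e$ is added and the minus sign when it is deleted. It therefore suffices to show that
$$(\mathbf{e}_i\mathbf{e}_j^T + \mathbf{e}_j\mathbf{e}_i^T)\mathbf{x} = x_j\mathbf{e}_i + x_i\mathbf{e}_j = 0,$$
which holds immediately because $x_i = x_j = 0$. This gives $A'\mathbf{x} = A\mathbf{x} = \lambda \mathbf{x}$. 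Since $\mathbf{x}\neq 0$, $\lambda$ is an eigenvalue of $G'$ with eigenvector $\mathbf{x}$.

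To keep the style of the preceding proof, I will also write the argument in terms of the local equations $\sum_{k\sim v} x_k = \lambda x_v$.

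\begin{itemize}
\item For a vertex $v\notin\{i,j\}$, the neighbourhood of $v$ is unchanged, so its equation is unchanged.
\item For vertex $i$, the neighbourhood gains or loses the single vertex $j$. The sum therefore changes by $\pm x_j = 0$, and the right-hand side $\lambda x_i$ is unchanged.
\item The same holds for vertex $j$ by symmetry.
\end{itemize}

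There is no real obstacle here. The only point worth stressing, which explains why the statement is ``more restrictive'' than the quadratic form suggests, is that both endpoints must vanish. If only $x_i = 0$, then the equation at $j$ changes by $\pm x_i = 0$, which is harmless. But the equation at $i$ changes by $\pm x_j$, and that need not be zero.

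It also matters that $A$ has no degree term on the diagonal, unlike $L$ and $Q$. Adding or deleting an edge therefore affects only the off-diagonal entries, and the hypothesis $x_i = x_j = 0$ is exactly what kills those contributions.
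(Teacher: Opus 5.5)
Your proof is correct and takes essentially the same approach as the paper: check the eigenvalue equation vertex by vertex, and note that only the equations at $i$ and $j$ change, by $\pm x_j$ and $\pm x_i$ respectively, and both vanish by hypothesis. Writing $A' = A \pm (\mathbf{e}_i\mathbf{e}_j^T + \mathbf{e}_j\mathbf{e}_i^T)$ just packages this check compactly and covers adding and deleting the edge at once, whereas the paper treats the non-adjacent (adding) case without loss of generality.
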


\begin{proof}
Without loss of generality, consider two nonadjacent vertices $i$ and $j$.  
For $i$ and $j$ we have 
$$ \sum_{k \sim i} x_k = \lambda x_i ,$$
and 
$$ \sum_{k \sim j} x_k = \lambda x_j.$$
Adding an edge $ij$, if we still have $\mathbf{x}$ as an eigenvector of the new matrix for the same eigenvalue, then the eigenvalue equations read:
$$ x_j + \sum_{k \sim i, k \neq j} x_k = \lambda x_i,$$
$$ x_i + \sum_{k \sim j, k \neq i} x_k = \lambda x_j,$$
This holds if and only if both $x_i=0$ and $x_j=0$. In this case, the eigenvalue equations for the other vertices are unaffected.

\end{proof}

\subsection{Alternating (matching) principle}

From Merris \cite{merris}, we have the following result based on a matching on the vertices of the graph. 
\begin{theorem}
Let $\mathbf{x}$ be an eigenvector of G affording $\lambda$ { for the matrix $L.$}
Suppose the vertices in $Z = \{k \in V(G): x(k) \neq 0\}$ can be paired 
up in such a way that if i and j are paired vertices then $x_i = -x_j$. 
Suppose further that all paired vertices in Z are adjacent (not adjacent) to 
each other. Let $G^{\prime}$ be the graph
obtained from G by removing (resp. adding new) edges between all paired vertices of
Z. Then $\mathbf{x}$  is an eigenvector of $G^{\prime}$ for $\lambda - 2$ 
(resp. for $\lambda + 2$).
\end{theorem}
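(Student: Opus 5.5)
The approach is a direct verification of the eigenvalue equation $L'\mathbf{x} = (\lambda \mp 2)\mathbf{x}$ vertex by vertex, in the same spirit as the proofs of the edge principles above. We write the Laplacian eigenvalue equation at a vertex $k$ as
$$ d_k x_k - \sum_{m \sim k} x_m = \lambda x_k, $$
and track how each term changes when the matching edges are removed (or added). Equivalently, $L' = L - L_M$ (resp. $L' = L + L_M$), where $L_M$ is the Laplacian of the matching $M$ formed by the paired vertices. It then suffices to show that $L_M \mathbf{x} = 2\mathbf{x}$.

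\textbf{Step 1: vertices outside $Z$.} If $x_k = 0$, then $k$ is not paired, so no edge incident to $k$ is removed or added. Hence $d_k$ and the neighbourhood of $k$ are unchanged, and the equation at $k$ remains $-\sum_{m\sim k} x_m = 0$. Note that this sum only involves neighbours of $k$, and the matching edges never touch $k$. Equivalently, row $k$ of $L_M$ is zero, so $(L_M\mathbf{x})_k = 0 = 2x_k$.

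\textbf{Step 2: paired vertices $i,j$ with $x_i = -x_j$.} In the removal case, $d_i$ drops by one and $j$ leaves the neighbourhood of $i$. The new left-hand side is
$$ (d_i - 1)x_i - \sum_{m\sim i} x_m + x_j = \lambda x_i - x_i + x_j = \lambda x_i - 2x_i, $$
using $x_j = -x_i$. The addition case is symmetric and gives $\lambda x_i + 2x_i$. The same computation holds at $j$. In matrix terms, $(L_M\mathbf{x})_i = x_i - x_j = 2x_i$.

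\textbf{Main point of care.} The argument uses that each vertex of $Z$ lies in exactly one pair, since the pairing is a perfect matching of $Z$. This guarantees that each degree changes by exactly one and only one neighbour term changes. There is no real obstacle beyond this bookkeeping. Finally, $\mathbf{x} \neq 0$ is preserved, so $\mathbf{x}$ is an eigenvector of $G'$ for $\lambda - 2$ (resp. $\lambda + 2$).
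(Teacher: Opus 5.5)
Your proof is correct: the vertex-wise check $(d_i \mp 1)x_i - \sum_{m\sim i}x_m \pm x_j = (\lambda \mp 2)x_i$, together with the observation that the equations at vertices with $x_k=0$ are untouched, is all that is needed. The paper gives no proof of this result (it is quoted from Merris), but its proof of the $Q$-analogue is the same direct eigenvalue-equation computation at a paired vertex; your formulation $L' = L \mp L_M$ with $L_M\mathbf{x} = 2\mathbf{x}$, and your explicit treatment of the vertices outside $Z$ (which the paper's $Q$ proof leaves implicit), are simply a cleaner packaging of that argument.
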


{
The previous result for $L$ can be adapted to $Q$. We have
\begin{theorem}
Let $\mathbf{x}$ be an eigenvector of G affording $\lambda$ {for the matrix $Q$.}
Suppose the vertices in $Z = \{k \in V(G): x(k) \neq 0\}$ can be paired 
up in such a way that if i and j are paired vertices then $x_i = x_j$. 
Suppose further that all paired vertices in Z are adjacent (not adjacent) to 
each other. Let $G^{\prime}$ be the graph
obtained from G by removing (resp. adding new) edges between all paired vertices of
Z. Then $\mathbf{x}$  is an eigenvector of $G^{\prime}$ for $\lambda - 2$ 
(resp. for $\lambda + 2$).
\end{theorem}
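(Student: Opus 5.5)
The plan is to verify the eigenvalue equation of $Q'$ (the signless Laplacian of $G'$) vertex by vertex, as in the proof of the edge principle for $Q$ above. For a vertex $k$, the equation for $G$ reads
$$d_k x_k + \sum_{l \sim k} x_l = \lambda x_k .$$
Changing the edges of the pairing changes only the degrees and neighbourhoods of the vertices in $Z$. Each vertex of $Z$ belongs to exactly one pair, so each such vertex gains or loses exactly one edge.

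First, the vertices outside $Z$. If $k \notin Z$, then $x_k = 0$. Its degree and neighbourhood are unchanged, since only edges between paired vertices of $Z$ are modified. Hence its equation in $G'$ is identical to the one in $G$. Because $x_k=0$, the right-hand side is $0 = (\lambda \pm 2) x_k$, so the equation holds for the shifted eigenvalue as well.

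Next, take $i \in Z$ paired with $j$, so that $x_i = x_j$. In the removal case, $i \sim j$ in $G$; in $G'$ the degree of $i$ drops by one and $x_j$ disappears from the neighbour sum. The equation therefore becomes
$$(d_i - 1)x_i + \sum_{l\sim i} x_l - x_j = \lambda x_i - x_i - x_j = (\lambda - 2) x_i ,$$
using $x_j = x_i$. In the addition case, $i \not\sim j$ in $G$; both the degree and the neighbour sum gain a term, giving $\lambda x_i + x_i + x_j = (\lambda+2)x_i$. By symmetry the same computation applies to $j$.

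Since every vertex is covered by one of these two cases, $Q'\mathbf{x} = (\lambda \mp 2)\mathbf{x}$, with $-2$ for removal and $+2$ for addition. As $\mathbf{x}\neq 0$, this shows that $\mathbf{x}$ is an eigenvector of $G'$ for $\lambda - 2$ (resp. $\lambda + 2$).

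There is no real obstacle here. The only points needing care are that the pairing is a perfect matching on $Z$, so each vertex of $Z$ sees exactly one modified edge, and that vertices outside $Z$ are untouched precisely because the shift in eigenvalue is multiplied by $x_k = 0$.
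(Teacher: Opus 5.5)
Your proof is correct and follows the same approach as the paper: a vertex-by-vertex check of the $Q$ eigenvalue equation, using $x_i = x_j$ to absorb the extra $\pm x_i \pm x_j$ terms. The paper writes out only the addition case for $i \in Z$. Your explicit treatment of the removal case and of the vertices outside $Z$, where $x_k = 0$ makes the shift $\pm 2$ harmless, fills in what the paper leaves implicit.
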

}
\begin{proof}
We consider adding a matching; the proof when deleting a matching is similar.
    Let vertex $i \in Z,$ and the corresponding pair $j$ such that $x_{i} = x_{j} \ne 0.$ The Laplacian eigenequation to vertex $i$ in $G^{\prime}$ given by 
\begin{eqnarray}
(d_i+1) x_{i} +\sum_{ \{k,i\} \, \in \,  E(G^{\prime}) } x_{k}  &=& d_i x_{i} + x_{i} + \sum_{\{k,i\} \in E(G) } x_{k} + x_{j} \nonumber \\
&=& \lambda \, x_{i} + x_{i} + x_{j}.
\end{eqnarray}
Since $x_{i} = x_{j},$ then 
$$
(d_i+1) x_{i}+ \sum_{\{i,j\} \in E(G^{\prime}) } x_{j}  = (\lambda+2) \, x_{i},
$$
and the proof is complete.
\end{proof}

Since $A=\frac{Q-L}{2}$, we can deduce a matching principle for $A$ obtained when pairing vertices all with value $0$. However, this is the same as using the edge principle for $A$ described above.

\subsection{Connection of a soft node to all vertices of a graph}

An important geometric transformation implicitly contained in Merris's article is to introduce a new vertex (soft node) connected to all vertices of a graph $G$. For $L$ and $Q$ we have the following.
\begin{theorem}\label{addQ}
Let $G=(V,E)$ be a graph affording an 
eigenvalue $\lambda \neq 0$ for $L$ or $Q$ for an eigenvector { $\mathbf{x}$ with $\sum_i x_i = 0$.} Then 
the new graph $G'$ obtained by adding
a node connected to all the nodes of $G$ has eigenvalue $\lambda +1$
for the eigenvector $\mathbf{x}^{\prime}$ obtained by extending $\mathbf{x}$ by a zero component.
\end{theorem}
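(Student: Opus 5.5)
The plan is a direct verification of the eigenvalue equation for $G'$, vertex by vertex, using the extended vector $\mathbf{x}' = (\mathbf{x}, 0)$. Call the new vertex $n+1$. In $G'$ every old vertex $i \in V$ gains exactly one neighbour, namely $n+1$, so its degree becomes $d_i+1$. The new vertex has degree $n$ and is adjacent to all of $V$. The argument treats $L$ and $Q$ together, writing the matrix as $D \pm A$ (sign $+$ for $Q$, $-$ for $L$), and checks two kinds of rows: those of old vertices and the row of the new vertex.

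\textbf{Old vertices.} For $i \in V$, the row of $L'$ or $Q'$ applied to $\mathbf{x}'$ is
\[
(d_i+1)x_i \pm \Big(\sum_{k\sim i,\,k\in V} x_k + x'_{n+1}\Big).
\]
Since $x'_{n+1}=0$, this equals
\[
\Big(d_i x_i \pm \sum_{k\sim i} x_k\Big) + x_i = \lambda x_i + x_i = (\lambda+1)x_i,
\]
using the eigenvalue equation for $\mathbf{x}$ in $G$.

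\textbf{New vertex.} The row of $n+1$ gives
\[
n\,x'_{n+1} \pm \sum_{k\in V} x_k = 0 \pm 0 = 0 = (\lambda+1)\,x'_{n+1},
\]
by the hypothesis $\sum_i x_i = 0$. This is the only place the zero-sum hypothesis is used, and it is the key step: it makes the new node a soft node. Putting the two cases together, $\mathbf{x}'$ is an eigenvector of $G'$ for $\lambda+1$, and $\mathbf{x}'\neq 0$ because $\mathbf{x}\neq 0$.

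There is no real obstacle here; the only subtle point is the role of the hypotheses. For $L$ with $\lambda \neq 0$, the condition $\sum_i x_i = 0$ is automatic, since $\mathbf{x}$ is orthogonal to the constant vector, so the hypothesis only matters for $Q$. The assumption $\lambda\neq0$ plays no role in the computation; it serves only to exclude the constant eigenvector of $L$, which has nonzero sum. I would note this in a remark after the proof.
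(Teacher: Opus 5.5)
Your proposal is correct and is essentially the paper's proof. The paper does the same computation in block form, $\begin{pmatrix} Q+I_n & \mathbf{1}\\ \mathbf{1}^T & n\end{pmatrix}\begin{pmatrix}\mathbf{x}\\ 0\end{pmatrix}=\begin{pmatrix}(\lambda+1)\mathbf{x}\\ \mathbf{1}^T\mathbf{x}\end{pmatrix}$, writing out only $Q$, and uses $\mathbf{1}^T\mathbf{x}=0$ exactly where you do; your closing remark that $\lambda\neq 0$ is never used is also right, since the paper's own Figure~\ref{fig:graphoperation1} applies the construction at $\lambda=0$ for $Q$.
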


\begin{proof}
{ The eigenvalue system reads $Q \mathbf{x} = \lambda \mathbf{x}$. Adding
a vertex $n+1$ of value $0$ connected to all other vertices we obtain 
the following identity.
\be\label{esQ}
\begin{pmatrix} Q+ I_n & \mathbf{1} \\ \mathbf{1}^T & n \end{pmatrix}  
\begin{pmatrix} \mathbf{x} \\ 0 \end{pmatrix}
= \begin{pmatrix} (\lambda+1) \mathbf{x} \\ \mathbf{1}^T \mathbf{x} \end{pmatrix},
\ee
where $I_n$ is the $n \times n$ identity matrix and $\mathbf{1}$ is
the vector of ones of dimension $n$.
	Then, if $\mathbf{1}^T \mathbf{x}= \sum_i x_i = 0$ the new eigenvalue is $\lambda' = \lambda +1$. }
\end{proof}

Figures \ref{fig:graphoperation1} and \ref{fig:graphoperation2} illustrate the operation of Theorem \ref{addQ}.
\begin{figure}[h]
    \centering
\includegraphics[width=0.7\linewidth]{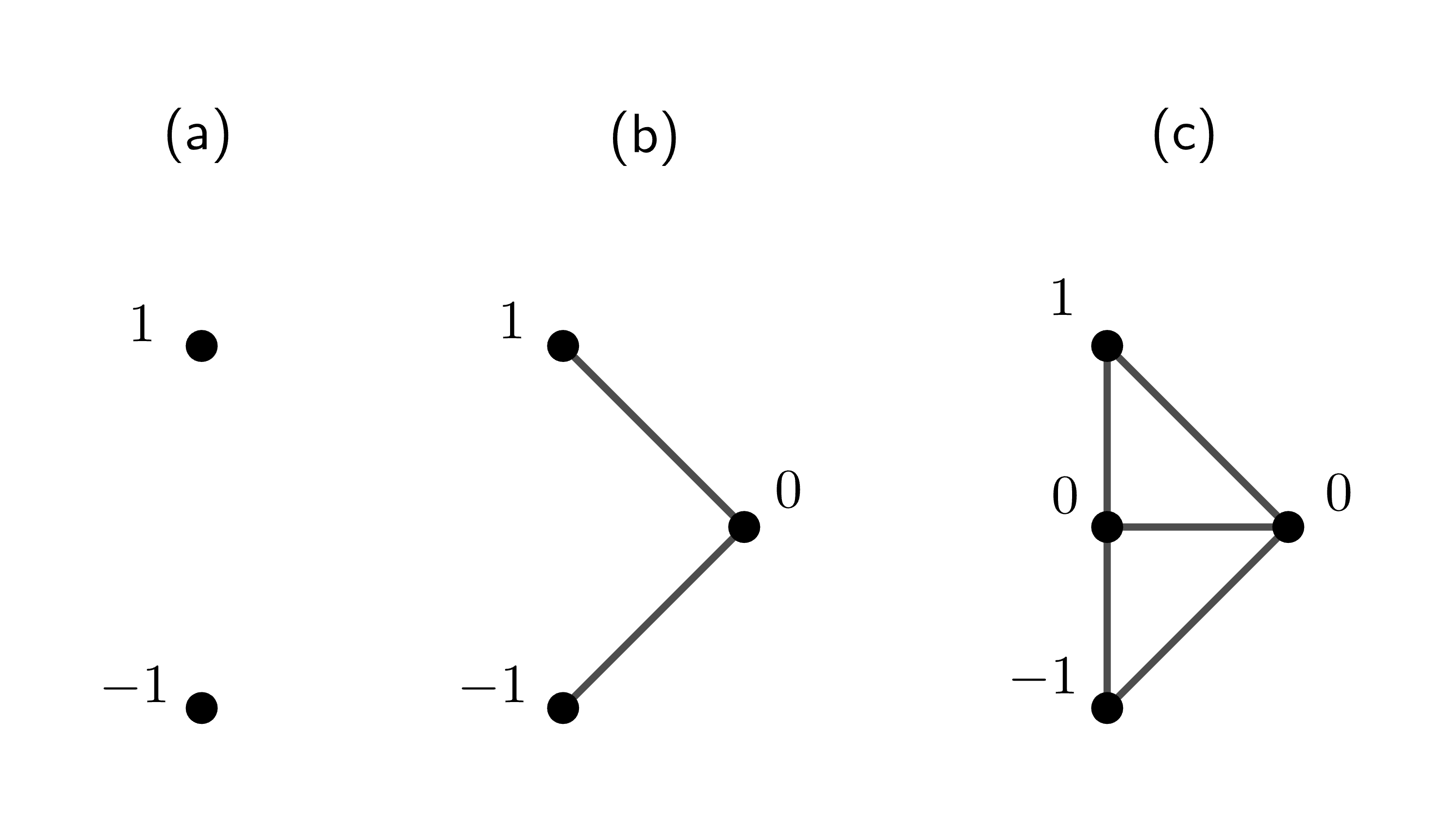}
    \caption{Applying successively the transformation of Theorem \ref{addQ} { to matrix $Q$} in graph $(a)$ ($\lambda = 0$) gives graph $(b)$ ($\lambda = 1$) and graph $(c)$ ($\lambda = 2$).  }
    \label{fig:graphoperation1}
\end{figure}

\begin{figure}[h]
    \centering
\includegraphics[width=0.7\linewidth]{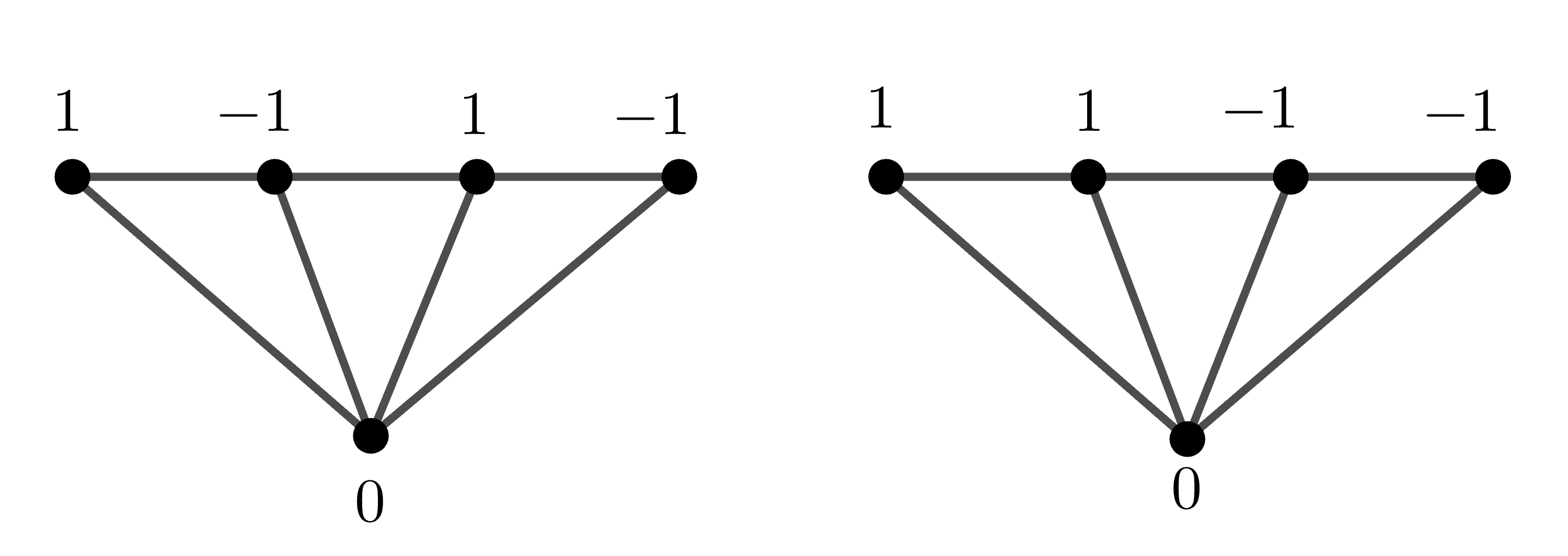}
    \caption{Graph obtained by adding a $0$ vertex to $P_4$ for $\lambda=0$ (left) and $\lambda=2$ (right) { associated with matrix $Q.$}  }
    \label{fig:graphoperation2}
\end{figure}

{ When applying these ideas to $A$, we have a slightly different result.}
\begin{theorem}\label{addA}
Let $G=(V,E)$ be a graph affording an eigenvalue $\lambda \neq 0$ for $A$ { associated with} an eigenvector $\mathbf{x}$ and $W \subset V$ such that $\sum_{i\in W} x_i = 0$.  Then the new graph $G'$ obtained by adding a node connected to all the nodes of $W$ has the same eigenvalue $\lambda$
for the eigenvector $\mathbf{x}^{\prime}$ obtained by extending $\mathbf{x}$ by a zero component.
\end{theorem}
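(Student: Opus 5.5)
The plan is a direct block-matrix computation in the spirit of the proof of Theorem \ref{addQ}. Let $\mathbf{1}_W \in \mathbb{R}^n$ denote the indicator vector of $W$, with entries $1$ on $W$ and $0$ elsewhere. Adding a vertex $n+1$ adjacent exactly to the vertices of $W$ gives the adjacency matrix of $G'$:
\be\label{esA}
A' = \begin{pmatrix} A & \mathbf{1}_W \\ \mathbf{1}_W^T & 0 \end{pmatrix}.
\ee
The key observation, in contrast with $L$ and $Q$, is that the diagonal block is not perturbed. The new edges contribute no degree terms to $A$, so no shift $I_n$ (or partial shift on $W$) appears. This is why the eigenvalue stays $\lambda$ rather than becoming $\lambda+1$, and why the vertex need not be joined to all of $V$.

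The steps are as follows. First, write $\mathbf{x}' = (\mathbf{x}, 0)^T$ and compute
$$A'\mathbf{x}' = \begin{pmatrix} A\mathbf{x} + 0\cdot \mathbf{1}_W \\ \mathbf{1}_W^T \mathbf{x} \end{pmatrix} = \begin{pmatrix} \lambda \mathbf{x} \\ \sum_{i\in W} x_i \end{pmatrix}.$$
Second, use the hypothesis $\sum_{i\in W} x_i = 0$ to see that the last component vanishes, so it equals $\lambda \cdot 0$. Third, conclude that $A'\mathbf{x}' = \lambda \mathbf{x}'$, with $\mathbf{x}' \neq 0$ because $\mathbf{x} \neq 0$.

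Equivalently, one can argue vertexwise. For $i \in W$ the eigenvalue equation gains the term $x_{n+1}=0$, which changes nothing. For $i \notin W$ the equation is unchanged. The new vertex has equation $\sum_{i \in W} x_i = \lambda x_{n+1} = 0$, which is exactly the hypothesis.

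There is essentially no obstacle here; the only point requiring care is bookkeeping. One must check that the zero entry of $\mathbf{x}'$ kills the new column's contribution to the old rows, and that the new row reduces to the hypothesis on $W$. The assumption $\lambda \neq 0$ is not actually needed for the argument. I would note this in passing, since $\lambda\cdot 0 = 0$ holds for every $\lambda$.
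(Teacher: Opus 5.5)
Your proposal is correct and follows essentially the same route as the paper: a block-matrix computation with the indicator vector of $W$ (the paper's $\mathbf{C}_W$), where the old rows are unaffected because $x_{n+1}=0$, and the new row reduces to the hypothesis $\sum_{i\in W} x_i = 0$. Your side remark is also accurate: the assumption $\lambda \neq 0$ is never used, neither in your argument nor in the paper's.
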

\begin{proof}
The eigenvalue system reads $A \mathbf{x} = \lambda \mathbf{x}$. Adding
a vertex $n+1$ of value $0$ connected to all the vertices in $W$ we obtain
the following identity.
\be\label{esA}
\begin{pmatrix} A & \mathbf{C}_{W} \\ \mathbf{C}_{W}^T & 0 \end{pmatrix}
\begin{pmatrix} \mathbf{x} \\ 0 \end{pmatrix}
= \begin{pmatrix} \lambda \mathbf{x} \\ \mathbf{C}_{W}^T \mathbf{x} \end{pmatrix}
= \lambda \begin{pmatrix}  \mathbf{x} \\ 0\end{pmatrix}.
\ee
where $\mathbf{C}_{W}$ is the vector of dimension $n$ with entries $1$ for vertices
in $W$ and $0$ otherwise.
This shows the result.


\end{proof}

Figures \ref{fig:graphoperation3} and \ref{fig:graphoperation4} illustrate the operation of Theorem \ref{addA}.
\begin{figure}[h]
    \centering
\includegraphics[width=0.7\linewidth]{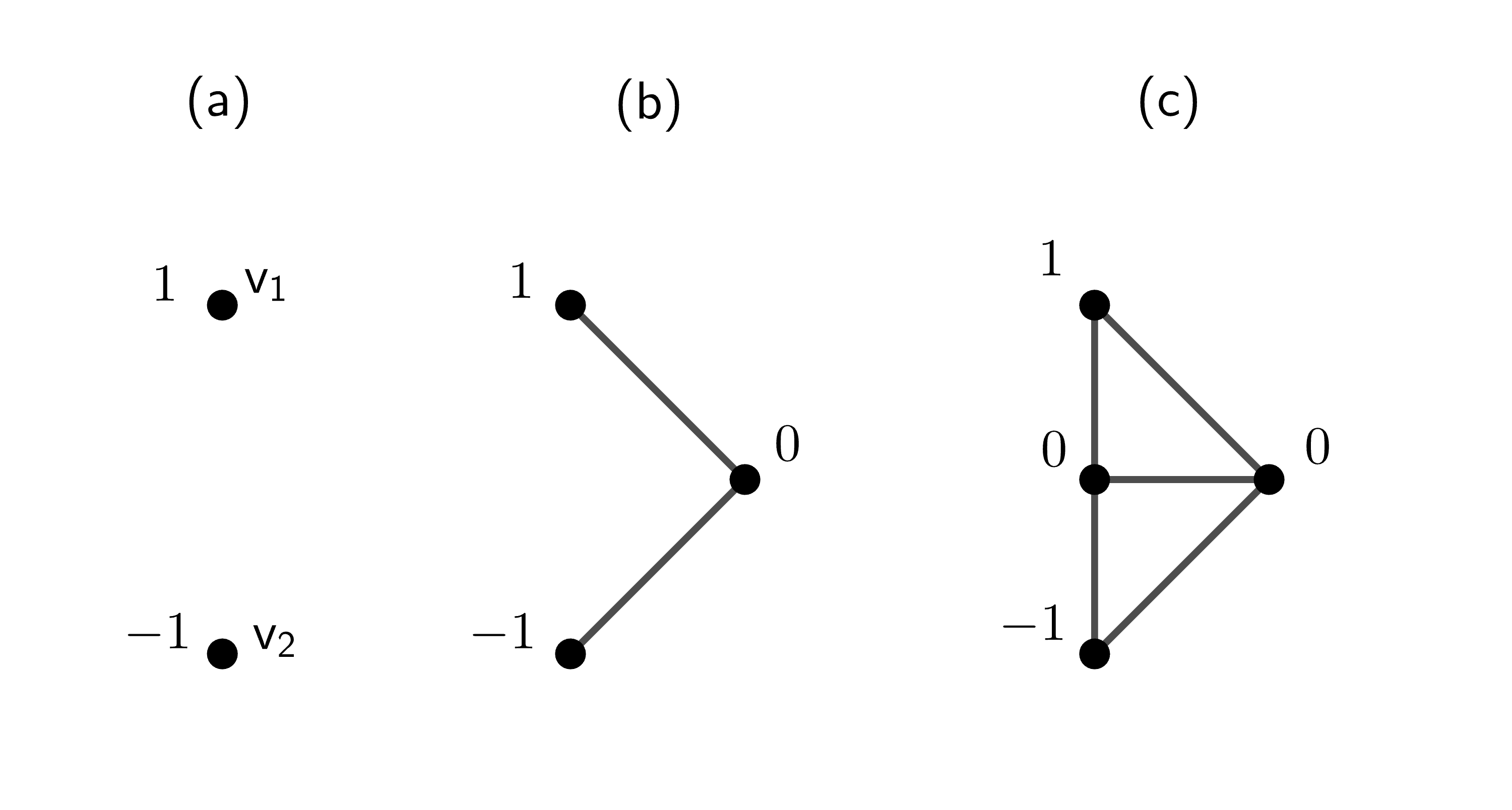}
    \caption{Applying successively the transformation of Theorem \ref{addA} with $W=\{v_1, v_2\}$ to show the evolution of eigenvectors of matrix $A$ associated with the same eigenvalue $\lambda=0$.   }
    \label{fig:graphoperation3}
\end{figure}

\begin{figure}[h]
    \centering
\includegraphics[width=0.5\linewidth]{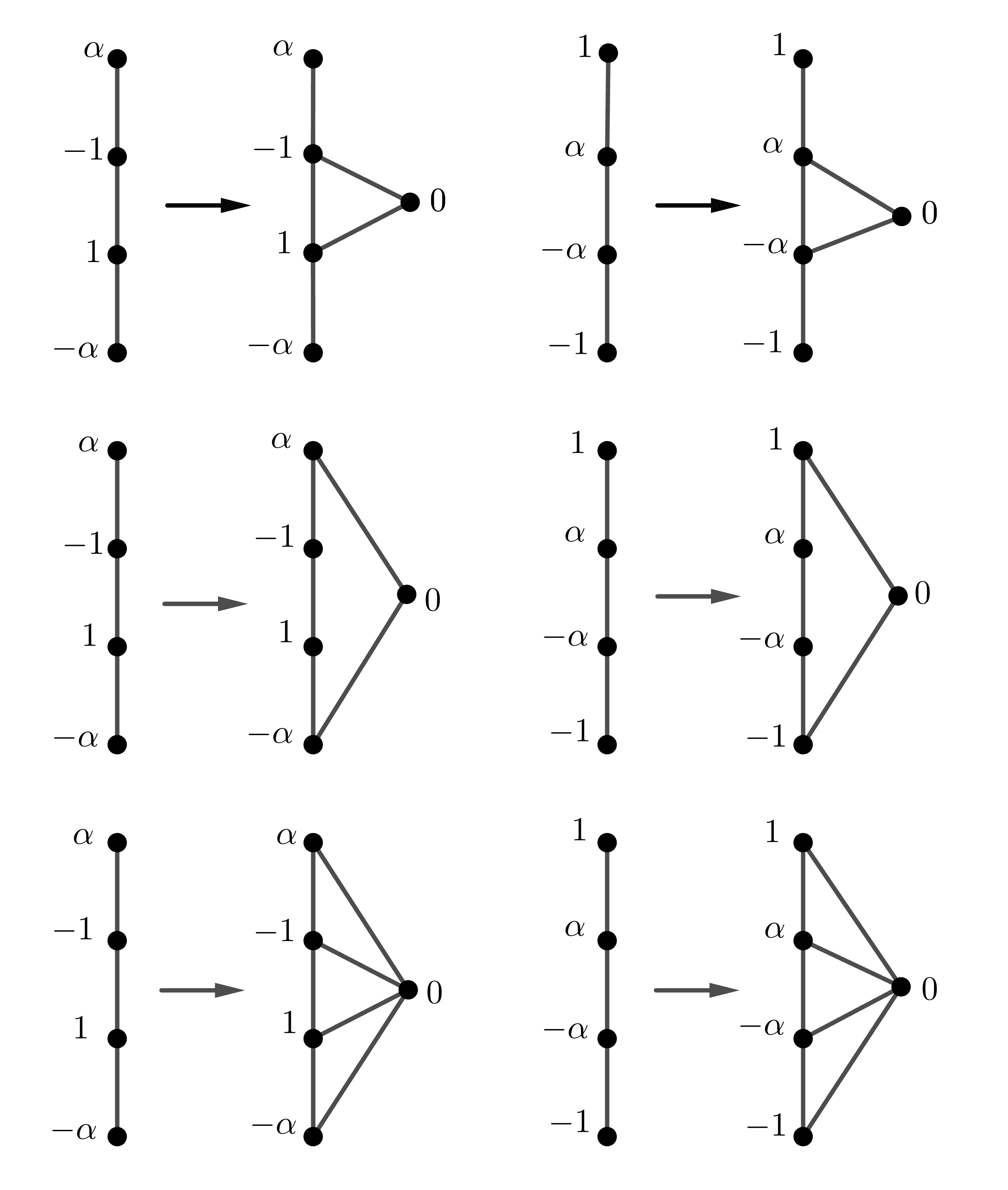}
    \caption{Application of Theorem \ref{addA} for matrix $A$ on $P_4$ on different subsets $W$ with { $\lambda = \frac{\sqrt{5}-1}{2}$ and $\lambda = -\frac{1+ \sqrt{5}}{2}$ (left) and $\lambda = \frac{\sqrt{5}-1}{2}$ (right). }}
    \label{fig:graphoperation4}
\end{figure}

\newpage

\subsection{Articulation}

An important geometric transformation which preserves eigenvalues of $L$ is to ``padd'' the eigenvectors with zeros.  This extends to $Q$ and $A$, and for the three operators $L, Q, A$ we have the following result.
\begin{theorem} Assume a graph $G=(V,E)$ with $n$ vertices where
$\mathbf{x}$ is an eigenvector of L, Q or A such that $x_i=0$ for an eigenvalue $\lambda$
and a vertex $i$. The graph $G'$ where we add a vertex $k$ connected to $i$
has the eigenvector $(\mathbf{x},0)^T$ for the eigenvalue $\lambda$.
\end{theorem}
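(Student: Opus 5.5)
We verify the eigenvalue equations directly, vertex by vertex, in the graph $G'$ with vertex set $V\cup\{k\}$ and edge set $E\cup\{\{i,k\}\}$. Write $M$ for one of $L,Q,A$ on $G$ and $M'$ for the corresponding matrix on $G'$, and put $\mathbf{x}'=(\mathbf{x},0)^T$, so $x'_k=0$. The only rows of $M'$ that differ from those of $M$ (extended by a zero in column $k$) are row $i$, where the degree rises to $d_i+1$ and an off-diagonal entry $M'_{ik}$ appears, and the new row $k$. All other rows $\ell\neq i,k$ are unchanged. Since $x'_k=0$, the extra column contributes nothing to them, so $(M'\mathbf{x}')_\ell=(M\mathbf{x})_\ell=\lambda x_\ell$.

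For row $i$ we treat the three operators together. We have
\[
(M'\mathbf{x}')_i=(M\mathbf{x})_i+\epsilon\, x_i + \sigma\, x'_k ,
\]
where $\epsilon=1$ for $L$ and $Q$ (the degree increment), $\epsilon=0$ for $A$, and $\sigma\in\{-1,+1\}$ is the sign of the new off-diagonal entry ($-1$ for $L$, $+1$ for $Q$ and $A$). Since $x_i=0$ and $x'_k=0$, both correction terms vanish. Hence $(M'\mathbf{x}')_i=\lambda x_i=0=\lambda x'_i$.

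For the new row $k$, vertex $k$ has degree $1$ and its only neighbour is $i$. This gives
\[
(M'\mathbf{x}')_k=\epsilon\, x'_k+\sigma\, x_i = 0 = \lambda x'_k ,
\]
again because $x_i=x'_k=0$. All $n+1$ equations therefore hold, and $\mathbf{x}'\neq 0$ because $\mathbf{x}\neq 0$, so $\mathbf{x}'$ is an eigenvector of $M'$ for $\lambda$.

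The only point needing care is the row-$k$ equation. It is exactly here that the hypothesis $x_i=0$ is essential, since otherwise $\sigma x_i\neq 0$ would have to equal $\lambda\cdot 0$. The same observation, that the perturbation $M'-M$ is supported on rows and columns $\{i,k\}$ where $\mathbf{x}'$ vanishes, also shows that the argument extends verbatim to attaching any graph at $i$ with a zero vector on the new part.
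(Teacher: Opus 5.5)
Your proposal is correct and follows the paper's approach: a direct check of the eigenvalue equations at the modified vertex $i$ and the new vertex $k$, using $x_i = x_k = 0$. Your uniform treatment of $L$, $Q$ and $A$ via $\epsilon$ and $\sigma$, together with your explicit check of the unchanged rows, makes it slightly more complete than the paper's version, which writes out only $Q$ and says the other cases extend trivially.
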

\begin{proof}
For $Q$
$$d_i x_i + \sum_{j \sim i} x_j = \lambda x_i.$$
Assume $x_i=0$ and adding vertex $k$ such that $x_k=0$ we get
$$(d_i+1)\cdot  0  + \sum_{j \sim i} x_j +0 = 0 = \lambda x_i.$$
{ For vertex $k$, we have 
$$x_{k} + x_{i} = 0 = \lambda x_{k}.$$
}

The proof is trivially extended to $A$.
\end{proof}



{ From now, we write $\lambda_A$, $\lambda_L$ and $\lambda_Q$ to refer to the  eigenvalues of the matrices $A, L$ and $Q$, respectively.} 

Regular graphs are such that $D= dI$ so that
$A,L$ and $Q$ share the same eigenvectors. The eigenvalues are related in the simple way:
$$\lambda_L=-\lambda_A+d, ~~\lambda_Q=\lambda_A+d.$$
{ For $\lambda=0$, we have $L \mathbf{1} = 0$ so that $A \mathbf{1} =d \mathbf{1}$
and $Q\mathbf{1}= 2d \mathbf{1}$. Since $A,Q$ are symetric and real matrices,
an eigenvector $\mathbf{x}$ for a different eigenvalue is orthogonal to 
$\mathbf{1}$ so that $\sum_{i=1}^n x_i=0$. Then $Q$ and $A$ satisfy the conditions of Theorems \ref{addQ} for $Q$ and \ref{addA} for $A$ with $W=V$. }

Reciprocally, the following theorem characterizes graphs such that $A,Q$ and $L$ share a common eigenvector.
\begin{theorem} \label{xaql}
If $\mathbf{x}$ is an eigenvector of $A, L$ and $Q$ affording eigenvalues $\lambda_A,
\lambda_L, \lambda_Q$, then the graph G is such that { every vertex corresponding to a
non zero component of $\mathbf{x}$ has degree $ d=\lambda_Q-\lambda_A = \lambda_L+\lambda_A = (\lambda_L+\lambda_Q)/2$.}
\end{theorem}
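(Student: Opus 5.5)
The plan is to use the identities $L = D - A$ and $Q = D + A$ to turn the three eigenvalue equations into statements about $D\mathbf{x}$. Then I read off the degree componentwise at every vertex where $x_i \neq 0$.

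First I write the hypotheses as
$$A\mathbf{x}=\lambda_A\mathbf{x},\qquad L\mathbf{x}=\lambda_L\mathbf{x},\qquad Q\mathbf{x}=\lambda_Q\mathbf{x}.$$
Since $L = D - A$, we get $D\mathbf{x} = L\mathbf{x} + A\mathbf{x} = (\lambda_L+\lambda_A)\mathbf{x}$. Since $Q = D + A$, we get $D\mathbf{x} = Q\mathbf{x} - A\mathbf{x} = (\lambda_Q-\lambda_A)\mathbf{x}$. Adding $L+Q=2D$ gives $D\mathbf{x} = \frac{\lambda_L+\lambda_Q}{2}\mathbf{x}$.

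Next I compare these identities componentwise, using that $D$ is diagonal. Component $i$ of $D\mathbf{x}$ is $d_i x_i$. So for each vertex $i$,
$$d_i x_i = (\lambda_L+\lambda_A)x_i = (\lambda_Q-\lambda_A)x_i = \frac{\lambda_L+\lambda_Q}{2}x_i.$$
Whenever $x_i\neq 0$ we may divide by $x_i$. This gives $d_i = \lambda_Q-\lambda_A = \lambda_L+\lambda_A = (\lambda_L+\lambda_Q)/2$, and this common value is independent of $i$. Because $\mathbf{x}\neq 0$, at least one such vertex exists, so the three expressions agree as real numbers and define the integer $d$.

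There is no real obstacle; the argument is essentially one line. The only subtlety is to note that vertices with $x_i=0$ are unconstrained, which is why the statement restricts to the support of $\mathbf{x}$.

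Conversely, if the support has constant degree $d$, then $D\mathbf{x}=d\mathbf{x}$. In that case any $A$-eigenvector is automatically an eigenvector of $L$ and $Q$. This explains the regular-graph remark preceding the theorem, but it is not needed for the proof.
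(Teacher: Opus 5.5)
Your proposal is correct and follows essentially the paper's argument. The paper adds and subtracts the vertex-wise $Q$- and $L$-eigenvalue equations to get $2d_i x_i=(\lambda_Q+\lambda_L)x_i$ and $\lambda_A=(\lambda_Q-\lambda_L)/2$, which is your identity $D=L+A=Q-A=\tfrac12(L+Q)$ read componentwise; your matrix-level version just states all three expressions for $d$ explicitly, whereas the paper leaves two of them implicit.
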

\begin{proof}
Take an eigenvector $\textbf{x}$ of $A,$ $L$ and $Q.$ For each vertex $i$, we have 
\begin{eqnarray}
d_{i} x_{i} + \sum_{j \sim i} x_{j} &=& \lambda_{Q} \, x_{i} \label{eq1} \\
d_{i} x_{i} - \sum_{j \sim i} x_{j} &=& \lambda_{L} \, x_{i}. \label{eq2} 
\end{eqnarray}
From \eqref{eq1} - \eqref{eq2}, we get 
\begin{eqnarray}
    \lambda_A = \frac{\lambda_Q -\lambda_{L}}{2}, \label{eq3} \\
2\, d_i \, x_i= (\lambda_Q + \lambda_{L})x_i . ~\label{eq4}
\end{eqnarray}

Equation \eqref{eq4} is satisfied by $x_{i}=0$ or $2d_i = (\lambda_Q + \lambda_{L})$. 
We cannot have $x_i=0$ for all $i$ because $\mathbf{x}$ is a nonzero vector. 
\end{proof}
Following \cite{dam18}, we name {\it soft regular} for a matrix $M$ and an eigenvector $\mathbf{x}$ of $M$ such graphs with the same degree for all nonzero vertices of $\mathbf{x}$.
\begin{remark}
Note that if two of the matrices $A, Q$ and $L$ share an eigenvector, then all of them share the same eigenvector.   
\end{remark}

Figure \ref{fig:trival} displays three graphs having a common eigenvector for matrices $A, Q$ and $L$. 

\begin{figure}[h]
    \centering
\includegraphics[width=0.8\linewidth]{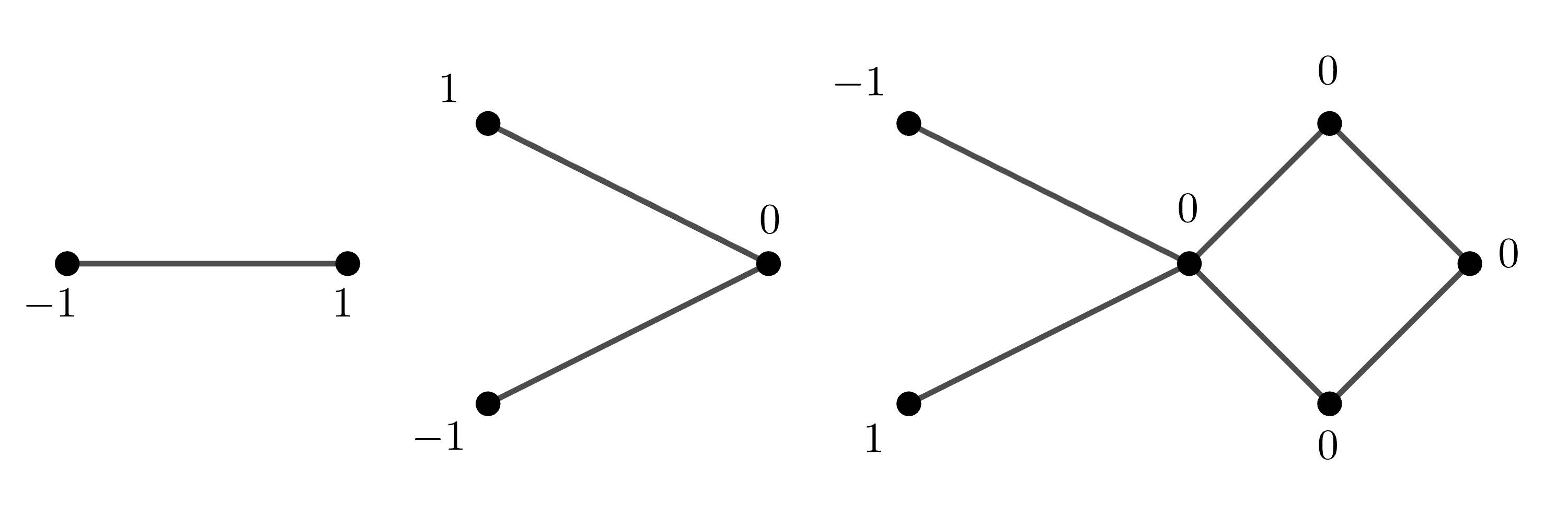}
    \caption{Three graphs having the property of Theorem \ref{xaql} }
    \label{fig:trival}
\end{figure}

{

\section{Bivalent graphs for $L$, $Q$ and $A$}
}
We recall the notion of bivalent graph.
\begin{definition}
A graph is said to be M-bivalent for the operator M, which can be $A, Q$ or $L$, if it admits an 
eigenvector whose components { take the values $\{-1,1\}$} for a given eigenvalue.
\end{definition}

{

$L$-bivalent graphs have been studied in \cite{dam18} and we recall the result below. We then extend the study for matrices $Q$ and 
$A$, and finish with the case of a common eigenvector of $L$, $Q$ and $A$.
}
\subsection{L-bivalent graphs}
{

From \cite{AL21} and \cite{dam18}, we recall the following general characterization of $L$-bivalent graphs.

\begin{theorem}
\label{thm:JA} 
The $L$-bivalent graphs are the regular bipartite graphs and their extensions obtained by adding edges
between nodes having the same value for a bivalent eigenvector.   
\end{theorem}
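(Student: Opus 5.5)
The plan is to work directly with the eigenvalue equation of $L$ written vertex by vertex, and then read the structure off the sign pattern. Let $\mathbf{x}\in\{-1,1\}^n$ satisfy $L\mathbf{x}=\lambda\mathbf{x}$. Set $V_+=\{i: x_i=1\}$ and $V_-=\{i:x_i=-1\}$. For a vertex $i$ write $d_i=d_i^{s}+d_i^{o}$, where $d_i^{s}$ counts neighbours with the same value and $d_i^{o}$ counts neighbours with the opposite value. The equation at $i$ is $d_i x_i-\sum_{j\sim i}x_j=\lambda x_i$. Since $\sum_{j\sim i}x_j=(d_i^{s}-d_i^{o})x_i$, it reduces to $2d_i^{o}=\lambda$. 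Hence every vertex has exactly $\lambda/2$ neighbours on the opposite side, whatever its number of same-side neighbours.

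For the forward direction, I would delete all edges inside $V_+$ and inside $V_-$. These edges join vertices with $x_i=x_j$, so the edge principle of Merris (the first theorem of Section 2) shows that each deletion preserves $\mathbf{x}$ as an eigenvector for $\lambda$. The resulting graph $H$ is bipartite with parts $V_+,V_-$. In $H$ every vertex has degree $d_i^{o}=\lambda/2$, so $H$ is $\lambda/2$-regular bipartite. Then $G$ is obtained from $H$ by adding edges between equal-valued nodes, which is exactly the claimed form.

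For the converse, take a $d$-regular bipartite graph with parts $X,Y$ and put $x=1$ on $X$ and $x=-1$ on $Y$. At each vertex we get $d x_i - (-d x_i)=2d x_i$, so $\mathbf{x}$ is an eigenvector for $\lambda=2d$. Adding edges between vertices of the same value again preserves this, by the edge principle.

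The only subtle point I expect is in the converse: the regular bipartite graph need not be connected, and the two parts need not come from a fixed bipartition. I would handle this by allowing an arbitrary $\pm1$ choice per component, while keeping the component structure consistent with $V_\pm$. If $\lambda=0$, then $H$ has no edges and $\mathbf{x}$ is constant on each component of $G$. For connected $G$ this means $\mathbf{x}=\pm\mathbf{1}$; this is the degenerate $0$-regular case, which is covered by the statement's wording once the empty regular bipartite graph is admitted. I would note this convention explicitly.
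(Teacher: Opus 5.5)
Your proof is correct and takes the same route as the paper's sketch. Both remove the equal-value edges (via Merris's edge principle) and use the vertex eigenvalue equation to show that the bipartite graph between $V_+$ and $V_-$ is $\lambda/2$-regular, then verify the converse directly. Deriving $2d_i^{o}=\lambda$ on $G$ itself makes the edge-principle deletion step logically unnecessary, a slight streamlining rather than a different argument.
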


This result is obtained by using Merris edge principle, deleting all edges between vertices with the same value 
in the considered eigenvector, then using the eigenvalue equation. The graphs can then be classified according to the eigenvalue $\lambda = 2d$, with $d$ the degree of the vertices in the regular bipartite graph.
In the case $\lambda = 0$, the graph $G$ is obtained by partitioning an even number of isolated vertices into two sets, $S$ and $\bar{S}$, of equal cardinality (the sets of vertices assigned $1$ and $-1$, respectively), and then adding any number of edges within either set.
In the case $\lambda = 2 $ ($d=1)$, $G$ is formed from a matching between $S$ and $\bar{S}$  by adding any number of edges in each of the two sets.
The case $\lambda = 4 $ ($d=2$) corresponds to even cycles alternating vertices of $S$ and $\bar{S}$, with any number of additional edges inside $S$ or $\bar{S}$.
For $\lambda = 6 $ ($d=3)$, we take cubic bipartite graphs and add edges again following the edge principle.

}

\subsection{Q-bivalent graphs}

In \cite{CRS07}, the authors prove that every bipartite graph is $Q$-bivalent related to $\lambda_{Q}=0.$
\begin{lemma}(\cite{CRS07})  \label{lem:Qbip}
The least eigenvalue of the signless Laplacian of a connected graph is equal to
0 if and only if the graph is bipartite. In this case 0 is a simple eigenvalue.
\end{lemma}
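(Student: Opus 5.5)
The plan is to argue directly from the quadratic form \eqref{qq}. Since $Q$ is positive semidefinite, its least eigenvalue is $0$ exactly when some nonzero $\mathbf{x}$ satisfies $\mathbf{x}^T Q \mathbf{x} = 0$. By \eqref{qq}, this happens exactly when $x_i + x_j = 0$ for every edge $ij \in E$.

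First I will show that a bipartite graph gives least eigenvalue $0$. If $G$ is bipartite with parts $V_1, V_2$, set $x_i = 1$ on $V_1$ and $x_i = -1$ on $V_2$. Every edge joins the two parts, so every term $(x_i+x_j)^2$ vanishes. Hence $\mathbf{x}^TQ\mathbf{x}=0$. Because $Q \succeq 0$, a vector attaining the minimum $0$ of the Rayleigh quotient is an eigenvector, so $Q\mathbf{x}=0$. This vector is exactly the $Q$-bivalent eigenvector discussed before the lemma.

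Next I will prove the converse together with simplicity. Suppose $Q\mathbf{x}=0$ with $\mathbf{x}\neq 0$. Then $x_j = -x_i$ for every edge $ij$, so $|x|$ is constant along edges. Since $G$ is connected, $|x_i| = c > 0$ for all $i$, and the sign of $x$ alternates along every edge. Fix a vertex $v$ with $x_v = c$ and walk along any closed walk from $v$ back to $v$. Each step flips the sign, so the walk must have even length. A graph with no odd closed walks has no odd cycles, and is therefore bipartite. The parts are $\{x>0\}$ and $\{x<0\}$.

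Simplicity comes from the same argument. Every null vector is determined by its value at $v$, because $x_i = (-1)^{\mathrm{dist}(v,i)} x_v$ for all $i$. So the kernel of $Q$ is one-dimensional.

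The only delicate point is the step "minimum of the quadratic form implies eigenvector". This is the standard fact that for a symmetric PSD matrix, $\mathbf{x}^TQ\mathbf{x}=0$ implies $Q\mathbf{x}=0$: write $Q=B^TB$ with $B$ the vertex–edge incidence matrix, so that $\|B\mathbf{x}\|^2=0$ gives $B\mathbf{x}=0$ and hence $Q\mathbf{x}=0$. Connectivity is used essentially in both the converse and simplicity. Without it, each bipartite component contributes its own null vector.
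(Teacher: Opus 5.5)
Your proof is correct. The paper gives no proof of its own and simply quotes the result from \cite{CRS07}; your argument is essentially the standard one given there: $Q\mathbf{x}=0$ iff $x_i=-x_j$ on every edge, so on a connected graph any null vector is a multiple of a $\pm1$ vector whose sign alternates across edges. One notational slip: with $B$ the $n\times m$ vertex--edge incidence matrix the factorization is $Q=BB^T$ (not $B^TB$), and $B$ must be the unsigned incidence matrix, since the oriented one gives $L$.
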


Following the same approach as in the previous section, we can classify all $Q$-bivalent graphs according to the eigenvalue $\lambda_Q$. To do this, we use the edge principle for $Q$, removing all edges between vertices that have opposite values in the considered eigenvector. Then the 
eigenvalue equation for vertex $i$ reads
$$d_i x_i + \sum_{j\sim i} x_j= \lambda_Q x_i,$$
so that
$$ 2 d_i x_i =\lambda_Q x_i .$$
This shows that the reduced graph has to be $d$-regular and we have $\lambda_Q = 2d$, similarly to the case of $L$.
We then have the following.
\begin{theorem}\label{thmQ}
The $Q$-bivalent graphs are the regular non connected graphs and their extensions obtained by adding any number of edges between some of the connected components and the others.   
\end{theorem}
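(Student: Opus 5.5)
The plan is to prove the two inclusions separately, mirroring the argument sketched for Theorem~\ref{thm:JA}. Throughout, $\mathbf{x}\in\{-1,1\}^n$ is the bivalent eigenvector, with $S=\{i : x_i=1\}$ and $\bar S=\{i: x_i=-1\}$. I assume both $S$ and $\bar S$ are nonempty. If one of them is empty, then $\mathbf{x}=\pm\mathbf{1}$, and $Q\mathbf{1}=\lambda_Q\mathbf{1}$ simply says that $G$ is regular; I will treat this as the degenerate case of the statement, with no edges added.

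\emph{Necessity.} Let $G$ be $Q$-bivalent with eigenvector $\mathbf{x}$ and eigenvalue $\lambda_Q$.
\begin{enumerate}
\item Every edge between $S$ and $\bar S$ joins vertices with $x_i=-x_j$. By the edge principle for $Q$ (Theorem 2.2), each such edge can be deleted one at a time while keeping $\mathbf{x}$ as an eigenvector for $\lambda_Q$. Call the result the reduced graph $G_0$.
\item In $G_0$ every neighbour $j$ of $i$ satisfies $x_j=x_i$. The eigenvalue equation at $i$ therefore becomes $2d_i^{0}x_i=\lambda_Q x_i$. Since $x_i\neq 0$, this gives $d_i^0=\lambda_Q/2=:d$ for every vertex, so $G_0$ is $d$-regular.
\item No edge of $G_0$ joins $S$ to $\bar S$, and both sets are nonempty. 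Hence $G_0$ is disconnected, and each connected component lies entirely in $S$ or entirely in $\bar S$.
\item $G$ is recovered from $G_0$ by adding back the deleted edges. Each of these joins a component contained in $S$ to a component contained in $\bar S$. So $G$ has exactly the form described in Theorem~\ref{thmQ}.
\end{enumerate}

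\emph{Sufficiency.} Let $G_0$ be a $d$-regular disconnected graph.
\begin{enumerate}
\item Split its components into two nonempty families. Put the vertices of the first family in $S$ with value $1$, and those of the second family in $\bar S$ with value $-1$.
\item For each vertex $i$ the eigenvalue equation reads $dx_i+\sum_{j\sim i}x_j=dx_i+dx_i=2d\,x_i$. Hence $\mathbf{x}$ is a $\{-1,1\}$ eigenvector of $Q(G_0)$ for $\lambda_Q=2d$.
\item Any edge added between a component in $S$ and a component in $\bar S$ joins vertices with $x_i=-x_j$. By Theorem 2.2, $\mathbf{x}$ remains an eigenvector for $2d$ after each addition.
\end{enumerate}

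The main obstacle is not computational but interpretive. The phrase ``between some of the connected components and the others'' must be read as ``between components carrying value $1$ and components carrying value $-1$''. Edges added inside $S$ or inside $\bar S$ would break regularity and destroy the eigenvector. I would therefore state the grouping of components into the two families explicitly in the proof, and fix the $\mathbf{x}=\pm\mathbf{1}$ convention from the first paragraph at the outset.
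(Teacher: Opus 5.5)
Your proof is correct and follows the paper's own route. You delete every edge between $S$ and $\bar S$ by the edge principle for $Q$, and read off $2d_i x_i=\lambda_Q x_i$ to get a $d$-regular reduced graph with $\lambda_Q=2d$, which is disconnected because both value classes are nonempty. You then reverse the construction for sufficiency, which the paper leaves implicit.

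The one slip is your degenerate case. The case $\mathbf{x}=\pm\mathbf{1}$ cannot be absorbed into the statement as ``no edges added'', because the statement requires the starting graph to be non-connected. For example, $C_5$ is connected and regular, its only $\{-1,1\}$ $Q$-eigenvectors are $\pm\mathbf{1}$, and it is not in the described family. So that case has to be excluded by requiring both values $\pm1$ to occur in a bivalent eigenvector.
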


Note that in the theorem above, some of the initial connected components hold value $1$ and the other $-1$.
Looking at the first possible values for $\lambda_Q$, we find similar results, except that the additional edges are between opposite value vertices instead of equal value vertices, and the two sets $S$ and $\bar{S}$ need not be of same cardinality. More precisely, for $\lambda_Q = 0 $, a $Q$-bivalent graph is obtained from isolated vertices parted in two sets $S$ and $\bar{S}$ and adding any number of edges between the two sets.
In the case $\lambda_Q = 2 $ ($d=1$), we have to consider a matching in $S$ and a matching in $\bar{S}$ and then add edges between the two sets. For $\lambda_Q = 4 $ ($d=2$) we start with cycles in each of the two sets. For $\lambda_Q = 6$ ($d=3$), then we start with cubic graphs in each of the two sets, and so on.

\subsection{A-bivalent graphs}

{
The $A$-bivalent graphs are more difficult to describe. However we have the following general result from (\cite{AL21}) .
}

For a given $S$ be a subset of the vertex set $V$ of a graph ($S \subset V$), let the complement set $\overline{S} = V \setminus S.$ We write $d_{i}(S)$ and  $d_{i}(S,\overline{S})$ for the degree of vertex $i$ inside $S$ and the degree of vertex $i$ with endpoints in  $S$ and $\overline{S}$, respectively.

\begin{theorem}\label{thmAL21}(\cite{AL21})
Let $G=(V,E)$ be a graph on $n$ vertices, and let $S \subset V.$ Then, there is a $\{-1,1\}$-eigenvector associated with eigenvalue $\lambda_A$ for $A$ if and only if 
\begin{equation}\label{eq:bivAthm1}
d_{i}(S)-d_{i}(S, \overline{S}) =\lambda_A \mbox{ for all { vertices} } i \in S,
\end{equation}
and 
\begin{equation}\label{eq:bivAthm2}
d_{i}(\overline{S})-d_{i}(\overline{S}, S) =\lambda_A \;\; \mbox{  for all { vertices } } i \in \overline{S}.
\end{equation}

\end{theorem}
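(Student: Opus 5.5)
The plan is to read the eigenvalue equation for $A$ vertex by vertex, with the vector $\mathbf{x}$ chosen from the set $S$. Set $x_i = 1$ for $i \in S$ and $x_i = -1$ for $i \in \overline{S}$; every $\{-1,1\}$-vector has this form for $S=\{i : x_i=1\}$. The theorem then says that $\mathbf{x}$ is an eigenvector of $A$ for $\lambda_A$ exactly when the two degree conditions \eqref{eq:bivAthm1} and \eqref{eq:bivAthm2} hold. Since $A\mathbf{x} = \lambda_A \mathbf{x}$ is equivalent to the family of scalar equations
$$\sum_{j \sim i} x_j = \lambda_A x_i, \qquad i \in V,$$
it suffices to show that, for each vertex, its scalar equation is equivalent to the corresponding degree condition.

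First I take $i \in S$. I split the neighbourhood of $i$ into neighbours in $S$, which contribute $+1$ each, and neighbours in $\overline{S}$, which contribute $-1$ each. This gives
$$\sum_{j\sim i} x_j = d_i(S) - d_i(S,\overline{S}).$$
The right-hand side of the equation is $\lambda_A x_i = \lambda_A$, so the equation at $i$ is exactly \eqref{eq:bivAthm1}.

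Next I take $i \in \overline{S}$. Now the neighbours in $\overline{S}$ contribute $-1$ each and those in $S$ contribute $+1$ each, so
$$\sum_{j\sim i} x_j = d_i(\overline{S},S) - d_i(\overline{S}).$$
The right-hand side is $-\lambda_A$. Multiplying both sides by $-1$ gives \eqref{eq:bivAthm2}.

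Because each step is an equivalence, the argument proves both directions at once: $\mathbf{x}$ satisfies all the vertex equations if and only if both degree conditions hold. The only point needing care is notation. The statement uses $d_i(S,\overline{S})$ for $i\in S$ and $d_i(\overline{S},S)$ for $i \in \overline{S}$, and I read both as the number of neighbours of $i$ lying in the opposite part, so the sign bookkeeping in the second case must follow that reading. There is also the degenerate case $S=\emptyset$ or $S=V$, where the vector is $\pm\mathbf{1}$. The same computation covers it: it says the graph is $\lambda_A$-regular, which is consistent with the remark on regular graphs.
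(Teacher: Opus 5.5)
Your proof is correct. The paper gives no proof of its own (it only cites the result), but its remark that the eigenvector is obtained by setting $x_i=1$ on $S$ and $x_i=-1$ on $\overline{S}$ is exactly your vertex-by-vertex reading of $A\mathbf{x}=\lambda_A\mathbf{x}$, and your interpretation of $d_i(S,\overline{S})$ and $d_i(\overline{S},S)$ as the number of neighbours in the opposite part matches how the paper uses this notation later, e.g.\ in Lemma~\ref{lemma:1}.
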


From Theorem \ref{thmAL21}, an $A$-bivalent eigenvector is obtained by assigning $\mathbf{x}_{i}=1$ if $i \in S$ and $\mathbf{x}_{i}=-1$ if $i \in \overline{S}$.

\subsection{Common bivalent eigenvector for $L$, $Q$ and $A$}

The next result is an application of Theorem \ref{xaql} when the eigenvector $\mathbf{x}$ is bivalent, that is, $\mathbf{x}$ has no zero entries. In this case, the nonzero entries of $\mathbf{x}$ induce a regular graph. 
\begin{theorem}\label{thm35}
Assume $\mathbf{x}$ is a common bivalent eigenvector for $A, L$ and $Q$. Then the graph is regular.
It is formed from a regular bipartite graph with any number of additional edges within the two sets of the bipartite graph such that the additional edges induce a regular graph.
\end{theorem}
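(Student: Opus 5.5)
We begin with Theorem \ref{xaql}. Since $\mathbf{x}$ is bivalent, every component satisfies $x_i \in \{-1,1\}$, so none is zero. Equation \eqref{eq4} then gives $2d_i = \lambda_Q+\lambda_L$ for every vertex $i$. Hence $G$ is $d$-regular with $d=(\lambda_L+\lambda_Q)/2$, which proves the first claim.

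For the structural description, let $S=\{i : x_i=1\}$ and $\overline{S}=V\setminus S$. Every vertex has degree $d$, so $d = d_i(S)+d_i(S,\overline{S})$ for $i\in S$, and symmetrically for $i\in\overline{S}$. Because $\mathbf{x}$ is an $L$-eigenvector, the eigenvalue equation at $i\in S$ reads
\[
d_i(1) - d_i(S)+d_i(S,\overline{S}) = \lambda_L ,
\]
which simplifies to $2d_i(S,\overline{S})=\lambda_L$. The same holds for $i\in\overline{S}$ with $d_i(\overline{S},S)$. This is the step where the $L$-bivalence characterization enters; equivalently, we are applying Theorem \ref{thm:JA} through the Merris edge principle.

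It follows that every vertex has exactly $r=\lambda_L/2$ neighbours on the opposite side. The subgraph $B$ formed by the edges between $S$ and $\overline{S}$ is therefore an $r$-regular bipartite graph. This forces $|S|=|\overline{S}|$ whenever $r>0$, by counting edges, and is trivially consistent when $r=0$. The remaining edges all lie inside $S$ or inside $\overline{S}$. Each vertex has exactly $d-r$ of them, so these additional edges induce a $(d-r)$-regular graph on $V$.

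The $A$-equation from Theorem \ref{thmAL21} gives $d_i(S)-d_i(S,\overline{S})=\lambda_A$, which here is $(d-r)-r=\lambda_A$. This matches \eqref{eq3}, so it adds no new condition. The converse direction is a short check: take a regular bipartite graph with parts $S,\overline{S}$, add edges inside the parts so that they induce a regular graph, and assign $\pm1$ by side. The resulting graph is regular, and the vector satisfies the $L$-equation with $\lambda_L=2r$. By regularity it is then also an eigenvector of $A$ and $Q$.

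The main obstacle is mostly interpretive rather than technical. The statement should be read as saying that the additional within-part edges induce a regular graph (of degree $d-r$), not merely that each part separately is regular. It is this uniformity across both parts, forced by the global degree $d$ and the constant cross-degree $r$, that we must extract carefully.
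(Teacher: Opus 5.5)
Your proof is correct and follows essentially the same route as the paper. Regularity comes from Theorem~\ref{xaql}, since a bivalent vector has no zero component, and the structure is the $L$-bivalent characterization of Theorem~\ref{thm:JA} combined with regularity; you re-derive that characterization directly from the eigenvalue equation ($2d_i(S,\overline{S})=\lambda_L$), which makes explicit the $(d-r)$-regularity of the within-part edges that the paper leaves implicit.
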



{ The reciprocal of Theorem 3.5 is not true, we should require $G$ to be bipartite.} 
\begin{theorem}
Consider a regular bipartite graph $G$. Then
it has a common bivalent eigenvector $\mathbf{x}$ for all three matrices $A, L$ and $Q$. 
\end{theorem}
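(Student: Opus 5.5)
The plan is to construct the common eigenvector explicitly from the bipartition and check the three eigenvalue equations vertex by vertex. Let $G$ be $d$-regular and bipartite with parts $S$ and $\overline{S}$. Define $\mathbf{x}$ by $x_i=1$ for $i\in S$ and $x_i=-1$ for $i\in\overline{S}$. Since $G$ is bipartite, every neighbour $j$ of a vertex $i$ lies in the opposite part, so $x_j=-x_i$. Since $G$ is $d$-regular, $i$ has exactly $d$ neighbours. Hence
$$\sum_{j\sim i} x_j = -d\,x_i \quad \mbox{for every vertex } i.$$
This says $A\mathbf{x}=-d\,\mathbf{x}$, so $\mathbf{x}$ is an eigenvector of $A$ with $\lambda_A=-d$.

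Since $D=dI$, the remaining two equations follow immediately from this one:
$$L\mathbf{x}=d\mathbf{x}-A\mathbf{x}=2d\,\mathbf{x}, \qquad Q\mathbf{x}=d\mathbf{x}+A\mathbf{x}=0.$$
So $\mathbf{x}$ is also an eigenvector of $L$ with $\lambda_L=2d$ and of $Q$ with $\lambda_Q=0$. The vector $\mathbf{x}$ has all entries in $\{-1,1\}$, so it is a common bivalent eigenvector.

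As a consistency check, the degree formula of Theorem \ref{xaql} gives $\lambda_Q-\lambda_A=d$ and $(\lambda_L+\lambda_Q)/2=d$, as it should. The result also matches the earlier characterizations:
\begin{itemize}
\item Theorem \ref{thm:JA}: regular bipartite graphs are $L$-bivalent with $\lambda_L=2d$.
\item Theorem \ref{thmAL21}: take the bipartition as the set $S$; then $d_i(S)=0$ and $d_i(S,\overline{S})=d$, giving $\lambda_A=-d$.
\item Lemma \ref{lem:Qbip}: bipartite graphs have $\lambda_Q=0$.
\end{itemize}

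There is no real obstacle in this argument. The only point needing care is that, unlike in the regular non-bipartite case, we cannot use the all-ones vector $\mathbf{1}$, because it does not have both signs. It is bipartiteness that makes the signed vector $\mathbf{x}$ an eigenvector of $A$, and regularity that then transfers this to $L$ and $Q$. Connectedness is not needed: if $G$ is disconnected, we apply the same sign assignment on each component.
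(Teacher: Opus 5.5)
Your proof is correct and follows the paper's route: the $\pm 1$ vector given by the bipartition is an $A$-eigenvector for $-d$, and $D=dI$ transfers it to $L$ and $Q$ (with $\lambda_L=2d$, $\lambda_Q=0$). The only difference is that the paper obtains the $A$-eigenvector by citing the symmetry of the bipartite adjacency spectrum, whereas you check $A\mathbf{x}=-d\,\mathbf{x}$ directly, which is slightly more self-contained.
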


 Recall that bipartite graphs have symmetric spectrum for the adjacency matrix, \cite{CRS97}. If in addition $G$ is $d$-regular then $\pm d$ are eigenvalues of $A$.  The constant eigenvector 
$[1,\ldots,1,\ldots,1]$ is an eigenvector for $d$ for $A,L$ and $Q$.  The eigenvector for $-d$ and $A$, $[1,\ldots,1,-1,\ldots,-1]$ is bivalent. Since $G$ is regular, $A, L$ and $Q$ share this same bivalent eigenvector. The first graph of Figure \ref{fig:abivalentregular} is an example of such a bivalent eigenvector for a bipartite regular graph. The graphs $G_1$ and $G_2$ are illustrations of Theorem \ref{thm35}.


\begin{figure}[H]
    \centering
    \includegraphics[width=0.5\linewidth]{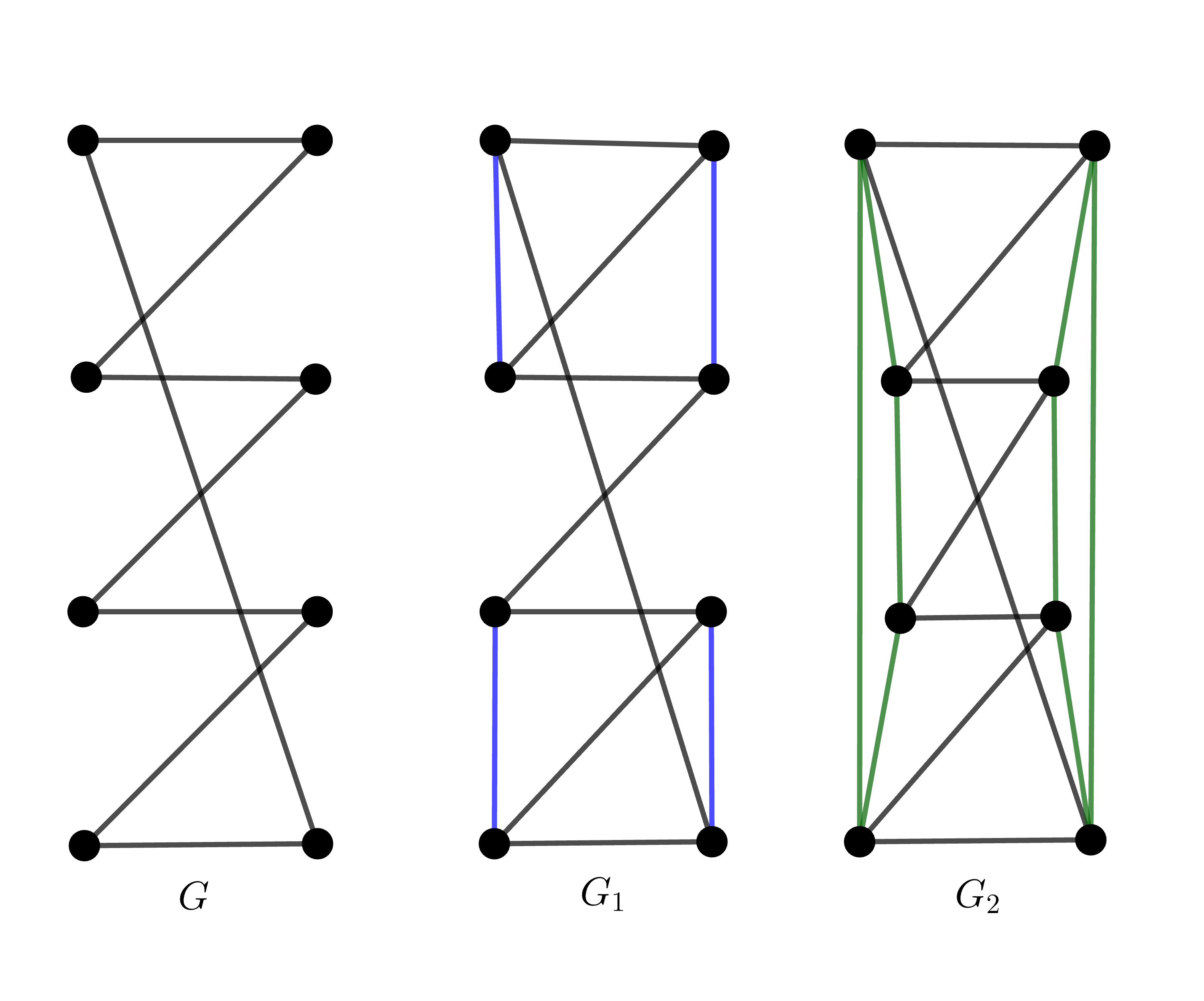}
    \caption{Graph $G$ is $C_8$ and $A$-bivalent associated with $\lambda_A=-2$; $G_1$ is $A$-bivalent associated with $\lambda_A=-1$; $G_2$ is $A$-bivalent associated with $\lambda_A = 0.$ 
    }
    \label{fig:abivalentregular}
\end{figure}

\subsection{Example : circulant graphs}

We call circulant a graph when its adjacency matrix $A$ is circulant and observe that in this case the matrices $Q$ and $L$ are circulant as well.
It is well-known \cite{bc2009} that the eigenvectors of any circulant matrix are given, for $0 \leq k \leq n-1$,  by $$v_{k} = (1, w^{k}, w^{2k},\ldots, w^{(n-1)k}),$$ where $w^{k} = \cos(\frac{2\pi k}{n}) + i \sin(\frac{2\pi k}{n})$  .

For circulant graphs, we have the following result.
\begin{proposition}\label{prop:1}
Let $G$ be a circulant graph with $n$ even vertices. Then, $G$ is bivalent for $A,Q$ and $L$. 
\end{proposition}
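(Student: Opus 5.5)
The plan is to use the explicit Fourier eigenvectors of circulant matrices recalled just above. I will take $k = n/2$, which is an integer because $n$ is even. Then $w^{k} = \cos(\pi) + i\sin(\pi) = -1$, so
$$v_{n/2} = (1,-1,1,-1,\ldots,1,-1).$$
This vector is real and has all its components in $\{-1,1\}$. Since $A$, $Q$ and $L$ are all circulant, and $Q = D + A$, $L = D - A$ with $D = dI$, the same vector $v_{n/2}$ is an eigenvector of each of them. That is precisely bivalence for all three matrices.

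To make the argument self-contained rather than leaning on the complex eigenvector formula, I would check the eigenvalue equation directly. Let the first row of the circulant matrix $A$ be $(c_0, c_1, \ldots, c_{n-1})$, with $c_0 = 0$ and $c_j = c_{n-j} \in \{0,1\}$. Write $x_m = (-1)^m$, with indices taken mod $n$; this is consistent because $n$ is even. Then
$$(A\mathbf{x})_m = \sum_{j} c_j x_{m+j} = (-1)^m \sum_j c_j (-1)^j.$$
Hence $\mathbf{x}$ is an eigenvector of $A$ with eigenvalue
$$\lambda_A = \sum_j (-1)^j c_j = (\#\text{even jumps}) - (\#\text{odd jumps}).$$
A circulant graph is $d$-regular with $d = \sum_j c_j$. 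It follows that $Q\mathbf{x} = (d + \lambda_A)\mathbf{x}$ and $L\mathbf{x} = (d - \lambda_A)\mathbf{x}$.

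There is no real obstacle here. The only point needing care is that reducing indices mod $n$ preserves the parity pattern of $(-1)^m$, which requires $n$ to be even. This is also where the hypothesis is used: for odd $n$ the vector $v_{n/2}$ is not defined. As a consistency check, I would note that the result agrees with Theorem \ref{thm35}. With $S$ the set of even-indexed vertices, the graph consists of the bipartite part given by the odd jumps together with a regular graph inside each class given by the even jumps. Likewise, the condition of Theorem \ref{thmAL21} holds because $d_i(S) - d_i(S,\overline{S}) = \lambda_A$ for every vertex.
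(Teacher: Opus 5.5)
Your proof is correct and follows the paper's route. The paper also uses the Fourier eigenvector $v_{n/2}=(1,-1,1,-1,\dots)^T$ of circulant matrices; it additionally mentions $(1,-1,-1,1,\dots)^T$ when $4\mid n$, which the statement does not need. Your direct computation $(A\mathbf{x})_m=(-1)^m\sum_j c_j(-1)^j$, combined with $D=dI$, spells out what the paper leaves as ``easy to see''.
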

\begin{proof}
Based on the above formula, 
it is easy to see that $(1,-1,1,-1,\dots)^T$ ($n$ multiple of 2) and $(1,-1,-1,1,\dots)^T$ ($n$ multiple of 4) are eigenvectors.
\end{proof}

An immediate consequence of Proposition \ref{prop:1} is that
only even cycles (2-regular bipartite) are bivalent for $A, Q$ and $L$. 
Specifically, for $n=2k$, $\lambda_{A}=-2,~\lambda_{Q}=0$
and for $n=4k$, $\lambda_{A}=0,~\lambda_{Q}=2$.




\section{Bivalent trees}

We study in this section the case of bivalent trees, starting with the particular case of paths.

\subsection{Paths}

{
$L$-bivalent paths are the ones with even order, see \cite{AJAL26+} and \cite{dam18}.
}

{  We prove that there are no $A$-bivalent path graphs of order greater than or equal to $3$.} Note that $P_2$ is $A$-bivalent since eigenvalues $\lambda_A=1$ and $\lambda_A = -1$ are associated with bivalent eigenvectors.

\begin{proposition}
Let $P_n$ be a path on $n \geq 3$ vertices. Then, $P_n$ is not $A$-bivalent.
\end{proposition}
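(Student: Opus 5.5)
We argue by contradiction, using the eigenvalue equations of $A$ at an endpoint and at an interior vertex of $P_n$. Label the path $1,2,\ldots,n$ with edges $\{i,i+1\}$. Suppose $\mathbf{x}\in\{-1,1\}^n$ satisfies $A\mathbf{x}=\lambda_A\mathbf{x}$.

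The first step uses the leaf. At vertex $1$ the equation reads $x_2=\lambda_A x_1$. Since both $x_1$ and $x_2$ lie in $\{-1,1\}$, this forces $\lambda_A = x_2/x_1 \in \{-1,1\}$; in particular $\lambda_A$ is odd. Equivalently, Theorem \ref{thmAL21} at a leaf gives $d_1(S)-d_1(S,\overline{S})=\pm 1$.

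The second step uses an interior vertex. Because $n\ge 3$, vertex $2$ has degree $2$, and its equation reads $x_1+x_3=\lambda_A x_2$. The left-hand side is a sum of two elements of $\{-1,1\}$, so it lies in $\{-2,0,2\}$, which is even. The right-hand side is $\pm\lambda_A=\pm 1$, which is odd. This is a contradiction.

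In the language of Theorem \ref{thmAL21}, the same argument says that $d_i(S)-d_i(S,\overline{S})\equiv d_i \pmod 2$ for every vertex $i$. A leaf and a degree-$2$ vertex therefore cannot both satisfy the condition with the same $\lambda_A$, and every path on $n\ge 3$ vertices contains both kinds of vertex.

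There is no real obstacle here. The only point needing care is that $n\ge3$ guarantees a vertex of degree $2$, which is also why $P_2$ escapes: there both vertices are leaves and $\lambda_A=\pm1$ works. This parity argument also points toward the general statement for trees announced in the abstract, namely that $A$-bivalence forces all degrees to share the parity of a leaf, hence to be odd.
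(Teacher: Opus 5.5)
Your proof is correct and follows essentially the same route as the paper: both use the eigenvalue equation at a leaf to force $\lambda_A=\pm1$, then derive a contradiction at the leaf's degree-$2$ neighbour. The paper gets there by checking the four placements of that neighbour and the next vertex in $S$ or $\overline{S}$. Your parity observation ($\sum_{j\sim i}x_j\equiv d_i \pmod 2$) collapses that case check into one line, and it is the same mechanism the paper later uses for Theorem \ref{thm:odddegree}.
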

\begin{proof}
 Write the vertex set of $P_n$ as $V = S \cup \overline{S}$ such that $S \cap \overline{S} = \emptyset.$  Let $u \in V$ be a leaf, $v \sim u$ and $w \sim v.$ \\
\noindent Consider the following cases: 

\noindent \textit{Case 1:} Let $u \in S$ and $v \in \overline{S}:$ note that 
$$d_u(S) - d_u(S,\overline{S}) = \lambda_A = -1.$$
Now, let $w \sim v$. If $w \in \overline{S},$ then $d_v(\overline{S}) - d_v(\overline{S}, S) = \lambda_A = 0,$ which is a contradiction. If $w \in S,$ we obtain $d_v(\overline{S}) - d_v(\overline{S}, S) = \lambda_A = -2,$ which is a contradiction.

\noindent  \textit{Case 2:} Let $u, v \in S:$ in this case, 
$$d_u(S) - d_u(S,\overline{S}) = \lambda_A = 1.$$ If $w \in \overline{S},$ then $d_v(S) - d_v(S, \overline{S}) = \lambda_A = 0,$ which is a contradiction. If $w \in S,$ then $d_v(S) - d_v(S, \overline{S}) = \lambda_A = 2,$ which is a contradiction. 
Therefore, there are no $A$-bivalent paths for $n\geq 3$.
\end{proof}

For $Q$ we have the following.
\begin{proposition}
Let $P_n$ be a path on $n \geq 4$ vertices. Then, $P_n$ is $Q$-bivalent associated with $\lambda_{Q} = 0$ for any $n$ and $Q$-bivalent associated with $\lambda_{Q} = 2$ when $n$ is even.
\end{proposition}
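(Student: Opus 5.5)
The plan is to exhibit explicit $\{-1,1\}$ eigenvectors and check the eigenvalue equation $d_i x_i + \sum_{j\sim i} x_j = \lambda_Q x_i$ at every vertex. Alternatively, we can read the result off Theorem \ref{thmQ}: delete all edges joining vertices of opposite sign, and check that the reduced graph is $d$-regular with $\lambda_Q = 2d$.

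For $\lambda_Q = 0$, label the path $1,2,\dots,n$ and take the alternating vector $x_i = (-1)^i$. Every edge $\{i,i+1\}$ joins opposite values, so $(x_i+x_{i+1})^2=0$. The quadratic form \eqref{qq} then vanishes, and since $Q$ is positive semidefinite this gives $Q\mathbf{x}=0$. This is also Lemma \ref{lem:Qbip}, since $P_n$ is bipartite and the kernel vector of $Q$ is the $\pm1$ bipartition vector. In terms of Theorem \ref{thmQ}, removing all edges between opposite values leaves $n$ isolated vertices, so $d=0$ and $\lambda_Q=0$; this holds for every $n$.

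For $\lambda_Q = 2$ with $n$ even, group the vertices into consecutive pairs $\{1,2\},\{3,4\},\dots,\{n-1,n\}$. Give both vertices of each pair the same sign, and alternate the sign from pair to pair: $\mathbf{x}=(1,1,-1,-1,1,1,\dots)$. The edges between consecutive pairs join opposite values, so removing them by the edge principle for $Q$ leaves a perfect matching, a $1$-regular graph, and Theorem \ref{thmQ} gives $\lambda_Q = 2$.

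To be safe I would also verify this directly, in two cases.
\begin{itemize}
\item An end vertex, say vertex $1$, has $x_1=x_2=1$, so $Q\mathbf{x}$ at vertex $1$ is $1+1=2=2x_1$.
\item An interior vertex $i$ has one neighbour of equal sign and one of opposite sign, so $Q\mathbf{x}$ at $i$ is $2x_i + x_i - x_i = 2x_i$.
\end{itemize}
Evenness of $n$ is exactly what makes the pairing perfect.

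The only real subtlety is the restriction on $n$ in the $\lambda_Q=2$ claim. If one also wants to show that odd paths are not $Q$-bivalent for $\lambda_Q=2$, argue as follows. The reduced graph must be $1$-regular, hence a perfect matching of the path by same-sign edges. That is impossible on an odd number of vertices. This step is a short parity count, and I expect no genuine obstacle beyond stating it cleanly.
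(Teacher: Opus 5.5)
Your proof is correct and follows essentially the paper's route. For $\lambda_Q=0$ you use the alternating bipartition vector, as the paper does via Lemma~\ref{lem:Qbip}; for even $n$ you use the consecutive-pair vector $(1,1,-1,-1,\dots)$, whose same-sign edges form a perfect matching (the paper's ``union of $P_2$'' in $S$ and $\overline{S}$), and you exclude odd $n$ by the same isolated-vertex/parity obstruction. Your direct vertex-by-vertex check and appeal to Theorem~\ref{thmQ} are, if anything, cleaner than the paper's write-up, which cites the $A$-criterion Theorem~\ref{thmAL21} where the $Q$ version is meant; likewise your parity argument covers every sign pattern for odd $n$, not just the single arrangement the paper considers.
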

\begin{proof}
It is clear from Lemma \ref{lem:Qbip}, that $\lambda_{Q} = 0$ is associated with a $Q$-bivalent eigenvector, and the first case is proved for any $n$ { $\geq 4 $}.  

Now, let $P_{n}: e_1\, e_2\,\ldots e_{n-1}$ where $e_{i}=(v_{i}, v_{i+1})$ is an edge connecting vertices $v_i$ and $v_{i+1}.$ There are two ways of arranging the  vertices of $P_n$ in the sets $S$ and $\overline{S}$. 
First, consider that we arrange the edges of $P_n$ such that there are no edges between the vertices of $S$ and no edges between the vertices of $\overline{S}$. This is exactly the same case as the previous one, which was already proved. Second, we can arrange the edges such that $e_{2k+1} \in S$ and $e_{2k+3} \in \overline{S}.$ If $n$ is even, then the graphs induced by $S$ and $\overline{S}$ are unions of paths $P_2.$ From Theorem \ref{thmAL21}, $P_n$ is $Q$-bivalent associated with $\lambda_{Q} = 2.$ If $n$ is odd, then one of the two graphs induced by $S$ or $\overline{S}$  is a union of paths $P_2$ and the other one is a union of paths $P_2$ and an isolated vertex. From Theorem \ref{thmAL21}, this is a contradiction, and the result follows.
\end{proof}

\subsection{Trees \label{subsec:44}}

{
In a previous work, \cite{dam18}  we characterize $L$-bivalent trees as the ones having a perfect matching. Equivalently in \cite{AJAL26+}, we characterize bivalent trees as unions of $P_2$ connected with edges between vertices corresponding to equal entries in the considered eigenvector. 
}

Since a tree is bipartite, it is $Q$-bivalent for eigenvalue $0,$ [Proposition 2.1 of \cite{CRS07}]. 
{
Following the result in Theorem \ref{thmQ}, a $Q$-bivalent tree can also be obtained for $\lambda=2$ with a matching on $n$ vertices, connecting the components with exactly $n-1$ edges without creating a cycle. There are no other possibilities.
}

From Theorem \ref{thmAL21}, the bivalent eigenvectors of an $A$-bivalent tree are associated with the very restricted set of eigenvalues $\lambda_{A} \in \{-1,1\}.$

\begin{proposition}\label{prop310}
Let $T$ be a tree on $n$ vertices. If $T$ is $A$-bivalent, the bivalent eigenvector is associated with eigenvalues $\lambda_A=1$ or $\lambda_A=-1.$
\end{proposition}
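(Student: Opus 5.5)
We will derive this from Theorem~\ref{thmAL21} applied at a leaf. Suppose $T$ is $A$-bivalent with eigenvector $\mathbf{x}\in\{-1,1\}^n$ for the eigenvalue $\lambda_A$, and let $S=\{i: x_i=1\}$, $\overline{S}=V\setminus S$. By Theorem~\ref{thmAL21}, every vertex $i$ satisfies
\[
\lambda_A = d_i(\text{own side}) - d_i(\text{other side}),
\]
with the same $\lambda_A$ at every vertex. Equivalently, at each vertex $i$ the eigenvalue equation $\sum_{j\sim i} x_j=\lambda_A x_i$ says that $\lambda_A$ equals the number of neighbours with the same sign as $i$ minus the number with the opposite sign.

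The key step is to evaluate this at a leaf. If $n\ge 2$, a tree has at least one leaf $u$, with a single neighbour $v$. There are two cases.
\begin{itemize}
\item If $u$ and $v$ lie on the same side, then $u$ contributes $1-0$, so $\lambda_A=1$.
\item If $u$ and $v$ lie on opposite sides, then $u$ contributes $0-1$, so $\lambda_A=-1$.
\end{itemize}
Since $\lambda_A$ is a single number common to all vertices, this already forces $\lambda_A\in\{-1,1\}$, which is the claim. The degenerate case $n=1$ has $\lambda_A=0$ with eigenvector $(1)$. We will either exclude it by assuming $n\ge 2$, as is implicit in the statement, or note it as the trivial exception.

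There is essentially no obstacle, since the argument needs only the existence of a leaf. The only care needed is to state clearly that Theorem~\ref{thmAL21} requires the same $\lambda_A$ at every vertex, so the value read off at one leaf determines the eigenvalue. We may also remark that all leaves must then be of the same type: either all attached to same-sign neighbours or all to opposite-sign ones. Finally, a parity observation follows from the same identity: since $\lambda_A=\pm1$ is odd and $d_i = d_i(\text{own}) + d_i(\text{other})$ has the same parity as $d_i(\text{own}) - d_i(\text{other})=\pm1$, every degree must be odd. This anticipates the abstract's claim and serves as a consistency check.
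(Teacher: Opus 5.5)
Your proof is correct and follows the paper's argument: you read off $\lambda_A$ from the eigenvalue condition at a single leaf, obtaining $1$ or $-1$ according to whether the leaf and its neighbour lie on the same side, and this value must then hold at every vertex. Your remark that the one-vertex tree, whose only eigenvalue is $\lambda_A=0$, has to be excluded is a valid caveat that the paper's statement leaves implicit.
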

\begin{proof}
Let $v$ be a leaf of $T$ and $u$ its neighbor. If $u$ and $v$ are in the same set, say $S$, then $d_{v}(S) - d_{v}(S,\overline{S}) = \lambda_A = 1.$ Since $T$ is $A$-bivalent, from Theorem \ref{thmAL21}, we should arrange all vertices of $T$ such that this same equation is true for $\lambda_A=1.$ Then, $\lambda_A = 1.$ 
If $v$ and $u$ are in different sets, then 
$d_{v}(S) - d_{v}(S,\overline{S}) = \lambda_A = -1.$ Using the same previous argument, this equation should be true for all vertices of the tree, that is, $\lambda_A=-1,$ and the result follows.
\end{proof}

\begin{lemma}\label{lemma:1}
Let $T \;\; (\ne P_n$) be an $A$-bivalent tree on  $n$ vertices. The following conditions hold:
\begin{itemize}
    \item [(a) ] All leaves, incident to the same vertex, should be in the same set.
    \item [(b) ] There are no two adjacent vertices, say $u$ and $v$ of degrees 2 and 1, respectively, in the same set.
    \item [(c) ] There is no vertex $w \in S$ such that $d_{w}(S) \geq 2$ and $d_{w}(S,\overline{S}) = 0.$ The same holds if $w \in \overline{S}.$
\end{itemize}
\end{lemma}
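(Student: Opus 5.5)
The plan is to argue everything from Theorem \ref{thmAL21} combined with Proposition \ref{prop310}. Fix a bivalent eigenvector and let $S$ be the set of vertices with value $1$. For each vertex $i$, write $\delta_i = d_i(\text{own set}) - d_i(\text{other set})$; Theorem \ref{thmAL21} says $\delta_i=\lambda_A$ for every $i$. By Proposition \ref{prop310}, $\lambda_A\in\{-1,1\}$. Every leaf has $\delta=\pm1$, and its sign is determined by whether the leaf lies in the same set as its neighbor. So the first step is to record this dichotomy: if $\lambda_A=1$, every leaf sits in its neighbor's set; if $\lambda_A=-1$, every leaf sits in the set opposite to its neighbor.

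For (a), let $\ell_1,\ell_2$ be leaves attached to the same vertex $w$. The value $\lambda_A$ is common to all vertices, so both leaves satisfy the same case of the dichotomy relative to $w$. In either case they lie in the same set: both in $w$'s set, or both in the complement of $w$'s set.

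For (b), suppose $u$ has degree $2$, $v$ is a leaf adjacent to $u$, and $u,v\in S$. Then $\delta_v=1$, so $\lambda_A=1$. Let $z$ be the other neighbor of $u$. Then $\delta_u=1+1=2$ if $z\in S$, and $\delta_u=1-1=0$ if $z\in\overline S$. Both values contradict $\delta_u=1$. This is exactly the computation in the path proposition, localized to the tree. The same argument applies with $\overline S$ in place of $S$.

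For (c), suppose $w\in S$ with $d_w(S)\ge 2$ and $d_w(S,\overline S)=0$. Then $\delta_w=d_w(S)\ge2$, which contradicts $\lambda_A\in\{-1,1\}$. This holds symmetrically for $\overline S$.

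The main subtlety is that Proposition \ref{prop310} is used to force $\lambda_A=\pm1$ globally. Its proof assumes the tree has a leaf, which holds for any tree with $n\ge2$. The hypothesis $T\neq P_n$ is not needed for the three computations above. Its role is presumably to exclude degenerate cases such as $P_2$, where conditions like (b) are vacuous or trivially satisfied. I would note this in the proof rather than rely on it.
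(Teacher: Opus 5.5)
Your proof is correct and follows essentially the same route as the paper. Part (a) compares the forced values $\pm1$ at two leaves sharing a neighbor. Part (b) is the local computation $\delta_u\in\{0,2\}$ versus $\delta_v=1$. Part (c) contrasts $\delta_w\ge 2$ with $\lambda_A\in\{-1,1\}$ from Proposition \ref{prop310}. Your remark that the hypothesis $T\neq P_n$ is not needed is also right: for $P_1$ and $P_2$ all three conditions are vacuous, and $P_n$ with $n\ge 3$ is not $A$-bivalent.
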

\begin{proof}
\noindent Write the vertex set of $T$ as $V = S \cup \overline{S}$ such that $S \cap \overline{S} = \emptyset.$ \\ 
\noindent (a)  Suppose that $u \in S$ and $v \in \overline{S} $ are two leaves of $T$ incident to the same vertex. Let $w \sim u$. In this case,
$d_u(S) - d_u(S, \overline{S}) = \lambda_A = -1.$ On the other hand, 
$d_v(\overline{S}) - d_u( \overline{S}, S) = \lambda_A = 1,$ which is a contradiction.

\noindent (b) Let $uv$ be an edge in $S$ such that $d_{u}(T) = 2$ and $d_{v}(T) = 1,$ and let $w \in N(u).$ Note that  
$d_u(S) - d_u(S, \overline{S}) = \lambda_A \in \{0,2\}$ depending whether $w$ is in $\overline{S}$ or $S$. On the other hand, 
$d_v(S) - d_v(S, \overline{S}) = \lambda_A = 1,$ which is a contradiction.

\noindent (c) for a leaf $v \in S, \;   d_{v}(S) - d_{v}(S,\overline{S}) = \lambda_A \in \{-1,1\}.$ For $w \in S, \; d_{w}(S) - d_{w}(S,\overline{S}) = \lambda_A \geq 2,$ which is a contradiction. 
\end{proof}

Next, we prove that $A$-bivalent trees are trees with only odd degrees.
\begin{theorem} \label{thm:odddegree}
    If a tree $T$ on $n$ vertices is $A$-bivalent then all of its vertices are of odd degree. Moreover, $n$ is even.
\end{theorem}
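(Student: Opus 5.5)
Let $T$ be $A$-bivalent with bivalent eigenvector $\mathbf{x}$, and let $S=\{i: x_i=1\}$. By Proposition \ref{prop310}, $\lambda_A\in\{-1,1\}$. By Theorem \ref{thmAL21}, every vertex $i$ satisfies
$$d_i(\text{own}) - d_i(\text{other}) = \lambda_A,$$
where $d_i(\text{own})$ counts neighbours in the same set as $i$ and $d_i(\text{other})$ counts neighbours in the opposite set. We also have
$$d_i = d_i(\text{own}) + d_i(\text{other}).$$
Adding the two relations gives
$$d_i = 2\,d_i(\text{other}) + \lambda_A .$$
Since $\lambda_A=\pm 1$ is odd, every degree $d_i$ is odd. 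This parity step is essentially the whole first claim. The only real work is the input from Proposition \ref{prop310}: every tree with at least two vertices has a leaf, and the leaf equation forces $\lambda_A=\pm1$. The degenerate case $n=1$ must be excluded, since the single vertex has degree $0$ and $\lambda_A=0$ with eigenvector $(1)$. I will assume $n\ge 2$, which is implicit in the paper's setting of connected trees with leaves.

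For the second claim I will use the handshake lemma, $\sum_i d_i = 2(n-1)$, which is even. A sum of $n$ odd integers has the parity of $n$, so $n$ is even.

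As a consistency check, I will verify that the equation for $\lambda_A=1$ forces $d_i(\text{other}) = (d_i-1)/2$, which is a nonnegative integer, and similarly for $\lambda_A=-1$. This is compatible with Lemma \ref{lemma:1}. No further obstacle is expected. The main point to handle carefully is stating the $n\ge 2$ assumption explicitly.
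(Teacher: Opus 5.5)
Your proof is correct and follows the paper's argument. You take $\lambda_A=\pm1$ from Proposition~\ref{prop310}, rewrite the eigenvalue equation as $d_i = 2\,d_i(\text{other})+\lambda_A$ to get odd degrees, and apply the handshake lemma to get $n$ even; your explicit exclusion of the one-vertex tree (degree $0$, eigenvalue $0$, eigenvector $(1)$) is a legitimate caveat that the paper leaves implicit, since Proposition~\ref{prop310} needs a leaf.
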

\begin {proof}
 Consider a tree $T$ with an $A$-eigenvector $\mathbf{x}$ affording an eigenvalue $\lambda_A$. We already know from Proposition \ref{prop310} that  $\lambda_A =1$ or $\lambda_A =-1$. 
Then the eigenvalue equation reads : 
$$
\sum_{j \sim i} x_j = \lambda_A x_i \qquad \forall i \in V.
$$
This implies that any vertex will have an odd number of adjacent vertices, with a difference of 1 in the count of $1's$ and -$1's.$ 
Also, since the sum of the vertex degrees of a graph is even and each vertex of the tree has odd degree, then $n$ should be even. 
\end{proof}
 More precisely in the proof above, when $\lambda_A = 1$, there is one more neighbor having the same value as $x_i$ when compared to the number of neighbors holding the opposite value. and when $\lambda_A=-1$, the number of neighbors holding the same value than $x_i$ is one unit less than the number of neighbors holding the opposite value.
In the particular case where vertex $i$ is a leaf, the above equation implies, when the eigenvalue is $1$, that the only vertex $j$ adjacent to $i$ holds the same value: $x_i=x_j$. And when $\lambda_A =-1$, the leaf and its incident vertex need to have opposite values. 

\begin{remark}
The reciprocal of Theorem \ref{thm:odddegree} is not true. See for instance the graph $K_{1,2k+1}$ for any integer $k$, which is not $A$-bivalent. 
\end{remark}


We conclude by observing that if $T$ is an $A-$bivalent tree, it should have an adequate number of pendant vertices attached to each non-pendant vertex. Figure \ref{fig:Abivtree} displays an example of an $A$-bivalent tree satisfying all properties of subsection \ref{subsec:44}.

\begin{figure}[H]
\centering    
\begin{tikzpicture}[scale=0.7, transform shape]
\definecolor{cv0}{rgb}{0.0,0.0,0.0}
\definecolor{cfv0}{rgb}{1.0,1.0,1.0}
\definecolor{clv0}{rgb}{0.0,0.0,0.0}
\definecolor{cv1}{rgb}{0.0,0.0,0.0}
\definecolor{cfv1}{rgb}{1.0,1.0,1.0}
\definecolor{clv1}{rgb}{0.0,0.0,0.0}
\definecolor{cv2}{rgb}{0.0,0.0,0.0}
\definecolor{cfv2}{rgb}{1.0,1.0,1.0}
\definecolor{clv2}{rgb}{0.0,0.0,0.0}
\definecolor{cv3}{rgb}{0.0,0.0,0.0}
\definecolor{cfv3}{rgb}{1.0,1.0,1.0}
\definecolor{clv3}{rgb}{0.0,0.0,0.0}
\definecolor{cv4}{rgb}{0.0,0.0,0.0}
\definecolor{cfv4}{rgb}{1.0,1.0,1.0}
\definecolor{clv4}{rgb}{0.0,0.0,0.0}
\definecolor{cv5}{rgb}{0.0,0.0,0.0}
\definecolor{cfv5}{rgb}{1.0,1.0,1.0}
\definecolor{clv5}{rgb}{0.0,0.0,0.0}
\definecolor{cv6}{rgb}{0.0,0.0,0.0}
\definecolor{cfv6}{rgb}{1.0,1.0,1.0}
\definecolor{clv6}{rgb}{0.0,0.0,0.0}
\definecolor{cv7}{rgb}{0.0,0.0,0.0}
\definecolor{cfv7}{rgb}{1.0,1.0,1.0}
\definecolor{clv7}{rgb}{0.0,0.0,0.0}
\definecolor{cv8}{rgb}{0.0,0.0,0.0}
\definecolor{cfv8}{rgb}{1.0,1.0,1.0}
\definecolor{clv8}{rgb}{0.0,0.0,0.0}
\definecolor{cv9}{rgb}{0.0,0.0,0.0}
\definecolor{cfv9}{rgb}{1.0,1.0,1.0}
\definecolor{clv9}{rgb}{0.0,0.0,0.0}
\definecolor{cv10}{rgb}{0.0,0.0,0.0}
\definecolor{cfv10}{rgb}{1.0,1.0,1.0}
\definecolor{clv10}{rgb}{0.0,0.0,0.0}
\definecolor{cv11}{rgb}{0.0,0.0,0.0}
\definecolor{cfv11}{rgb}{1.0,1.0,1.0}
\definecolor{clv11}{rgb}{0.0,0.0,0.0}
\definecolor{cv12}{rgb}{0.0,0.0,0.0}
\definecolor{cfv12}{rgb}{1.0,1.0,1.0}
\definecolor{clv12}{rgb}{0.0,0.0,0.0}
\definecolor{cv13}{rgb}{0.0,0.0,0.0}
\definecolor{cfv13}{rgb}{1.0,1.0,1.0}
\definecolor{clv13}{rgb}{0.0,0.0,0.0}
\definecolor{cv14}{rgb}{0.0,0.0,0.0}
\definecolor{cfv14}{rgb}{1.0,1.0,1.0}
\definecolor{clv14}{rgb}{0.0,0.0,0.0}
\definecolor{cv15}{rgb}{0.0,0.0,0.0}
\definecolor{cfv15}{rgb}{1.0,1.0,1.0}
\definecolor{clv15}{rgb}{0.0,0.0,0.0}
\definecolor{cv16}{rgb}{0.0,0.0,0.0}
\definecolor{cfv16}{rgb}{1.0,1.0,1.0}
\definecolor{clv16}{rgb}{0.0,0.0,0.0}
\definecolor{cv17}{rgb}{0.0,0.0,0.0}
\definecolor{cfv17}{rgb}{1.0,1.0,1.0}
\definecolor{clv17}{rgb}{0.0,0.0,0.0}
\definecolor{cv18}{rgb}{0.0,0.0,0.0}
\definecolor{cfv18}{rgb}{1.0,1.0,1.0}
\definecolor{clv18}{rgb}{0.0,0.0,0.0}
\definecolor{cv19}{rgb}{0.0,0.0,0.0}
\definecolor{cfv19}{rgb}{1.0,1.0,1.0}
\definecolor{clv19}{rgb}{0.0,0.0,0.0}
\definecolor{cv20}{rgb}{0.0,0.0,0.0}
\definecolor{cfv20}{rgb}{1.0,1.0,1.0}
\definecolor{clv20}{rgb}{0.0,0.0,0.0}
\definecolor{cv21}{rgb}{0.0,0.0,0.0}
\definecolor{cfv21}{rgb}{1.0,1.0,1.0}
\definecolor{clv21}{rgb}{0.0,0.0,0.0}
\definecolor{cv8v9}{rgb}{0.0,0.0,0.0}
\definecolor{cv8v10}{rgb}{0.0,0.0,0.0}
\definecolor{cv12v13}{rgb}{0.0,0.0,0.0}
\definecolor{cv13v14}{rgb}{0.0,0.0,0.0}
\definecolor{cv13v19}{rgb}{0.0,0.0,0.0}
\definecolor{cv14v15}{rgb}{0.0,0.0,0.0}
\definecolor{cv14v20}{rgb}{0.0,0.0,0.0}
\definecolor{cv14v21}{rgb}{0.0,0.0,0.0}
\definecolor{cv14v0}{rgb}{0.0,0.0,0.0}
\definecolor{cv15v16}{rgb}{0.0,0.0,0.0}
\definecolor{cv15v1}{rgb}{0.0,0.0,0.0}
\definecolor{cv15v2}{rgb}{0.0,0.0,0.0}
\definecolor{cv15v3}{rgb}{0.0,0.0,0.0}
\definecolor{cv16v6}{rgb}{0.0,0.0,0.0}
\definecolor{cv16v7}{rgb}{0.0,0.0,0.0}
\definecolor{cv16v8}{rgb}{0.0,0.0,0.0}
\definecolor{cv16v17}{rgb}{0.0,0.0,0.0}
\definecolor{cv16v4}{rgb}{0.0,0.0,0.0}
\definecolor{cv16v5}{rgb}{0.0,0.0,0.0}
\definecolor{cv17v11}{rgb}{0.0,0.0,0.0}
\definecolor{cv17v18}{rgb}{0.0,0.0,0.0}
\Vertex[style={minimum size=1.0cm,draw=cv0,fill=cfv0,text=clv0,shape=circle},LabelOut=false,L=\hbox{$-1$},x=7.87cm,y=2.85cm]{v0}
\Vertex[style={minimum size=1.0cm,draw=cv1,fill=cfv1,text=clv1,shape=circle},LabelOut=false,L=\hbox{$-1$},x=6.04cm,y=4.16cm]{v1}
\Vertex[style={minimum size=1.0cm,draw=cv2,fill=cfv2,text=clv2,shape=circle},LabelOut=false,L=\hbox{$-1$},x=4.14cm,y=3.18cm]{v2}
\Vertex[style={minimum size=1.0cm,draw=cv3,fill=cfv3,text=clv3,shape=circle},LabelOut=false,L=\hbox{$-1$},x=5.00cm,y=2.87cm]{v3}
\Vertex[style={minimum size=1.0cm,draw=cv4,fill=cfv4,text=clv4,shape=circle},LabelOut=false,L=\hbox{$-1$},x=3.7cm,y=6.11cm]{v4}
\Vertex[style={minimum size=1.0cm,draw=cv5,fill=cfv5,text=clv5,shape=circle},LabelOut=false,L=\hbox{$-1$},x=4.30cm,y=5.62cm]{v5}
\Vertex[style={minimum size=1.0cm,draw=cv6,fill=cfv6,text=clv6,shape=circle},LabelOut=false,L=\hbox{$-1$},x=2.36cm,y=5.85cm]{v6}
\Vertex[style={minimum size=1.0cm,draw=cv7,fill=cfv7,text=clv7,shape=circle},LabelOut=false,L=\hbox{$-1$},x=2.39cm,y=4.63cm]{v7}
\Vertex[style={minimum size=1.0cm,draw=cv8,fill=cfv8,text=clv8,shape=circle},LabelOut=false,L=\hbox{$1$},x=1.22cm,y=5.44cm]{v8}
\Vertex[style={minimum size=1.0cm,draw=cv9,fill=cfv9,text=clv9,shape=circle},LabelOut=false,L=\hbox{$-1$},x=0.00cm,y=5.11cm]{v9}
\Vertex[style={minimum size=1.0cm,draw=cv10,fill=cfv10,text=clv10,shape=circle},LabelOut=false,L=\hbox{$-1$},x=0.08cm,y=5.98cm]{v10}
\Vertex[style={minimum size=1.0cm,draw=cv11,fill=cfv11,text=clv11,shape=circle},LabelOut=false,L=\hbox{$-1$},x=3.80cm,y=8.00cm]{v11}
\Vertex[style={minimum size=1.0cm,draw=cv12,fill=cfv12,text=clv12,shape=circle},LabelOut=false,L=\hbox{$-1$},x=8.14cm,y=0.00cm]{v12}
\Vertex[style={minimum size=1.0cm,draw=cv13,fill=cfv13,text=clv13,shape=circle},LabelOut=false,L=\hbox{$1$},x=7.88cm,y=1.05cm]{v13}
\Vertex[style={minimum size=1.0cm,draw=cv14,fill=cfv14,text=clv14,shape=circle},LabelOut=false,L=\hbox{$1$},x=6.78cm,y=2.39cm]{v14}
\Vertex[style={minimum size=1.0cm,draw=cv15,fill=cfv15,text=clv15,shape=circle},LabelOut=false,L=\hbox{$1$},x=4.99cm,y=3.75cm]{v15}
\Vertex[style={minimum size=1.0cm,draw=cv16,fill=cfv16,text=clv16,shape=circle},LabelOut=false,L=\hbox{$1$},x=3.26cm,y=5.30cm]{v16}
\Vertex[style={minimum size=1.0cm,draw=cv17,fill=cfv17,text=clv17,shape=circle},LabelOut=false,L=\hbox{$1$},x=3.27cm,y=7.01cm]{v17}
\Vertex[style={minimum size=1.0cm,draw=cv18,fill=cfv18,text=clv18,shape=circle},LabelOut=false,L=\hbox{$-1$},x=2.80cm,y=7.98cm]{v18}
\Vertex[style={minimum size=1.0cm,draw=cv19,fill=cfv19,text=clv19,shape=circle},LabelOut=false,L=\hbox{$-1$},x=9.00cm,y=0.38cm]{v19}
\Vertex[style={minimum size=1.0cm,draw=cv20,fill=cfv20,text=clv20,shape=circle},LabelOut=false,L=\hbox{$-1$},x=6.56cm,y=1.49cm]{v20}
\Vertex[style={minimum size=1.0cm,draw=cv21,fill=cfv21,text=clv21,shape=circle},LabelOut=false,L=\hbox{$-1$},x=7.87cm,y=2.14cm]{v21}
\Edge[lw=0.05cm,style={color=cv8v9,},](v8)(v9)
\Edge[lw=0.05cm,style={color=cv8v10,},](v8)(v10)
\Edge[lw=0.05cm,style={color=cv12v13,},](v12)(v13)
\Edge[lw=0.05cm,style={color=cv13v14,},](v13)(v14)
\Edge[lw=0.05cm,style={color=cv13v19,},](v13)(v19)
\Edge[lw=0.05cm,style={color=cv14v15,},](v14)(v15)
\Edge[lw=0.05cm,style={color=cv14v20,},](v14)(v20)
\Edge[lw=0.05cm,style={color=cv14v21,},](v14)(v21)
\Edge[lw=0.05cm,style={color=cv14v0,},](v14)(v0)
\Edge[lw=0.05cm,style={color=cv15v16,},](v15)(v16)
\Edge[lw=0.05cm,style={color=cv15v1,},](v15)(v1)
\Edge[lw=0.05cm,style={color=cv15v2,},](v15)(v2)
\Edge[lw=0.05cm,style={color=cv15v3,},](v15)(v3)
\Edge[lw=0.05cm,style={color=cv16v6,},](v16)(v6)
\Edge[lw=0.05cm,style={color=cv16v7,},](v16)(v7)
\Edge[lw=0.05cm,style={color=cv16v8,},](v16)(v8)
\Edge[lw=0.05cm,style={color=cv16v17,},](v16)(v17)
\Edge[lw=0.05cm,style={color=cv16v4,},](v16)(v4)
\Edge[lw=0.05cm,style={color=cv16v5,},](v16)(v5)
\Edge[lw=0.05cm,style={color=cv17v11,},](v17)(v11)
\Edge[lw=0.05cm,style={color=cv17v18,},](v17)(v18)
\end{tikzpicture}
\caption{An $A$-bivalent tree associated with $\lambda_A=-1$}
\label{fig:Abivtree}
\end{figure}

We observe that the same tree of Figure \ref{fig:Abivtree} is also $A$-bivalent to  $\lambda_A = 1$ associated with a different eigenvector that can be observed by the labels of the vertices in Figure \ref{fig:Abivtree2}.

\begin{figure}[H]
\centering    
\begin{tikzpicture}[scale=0.7, transform shape]
\definecolor{cv0}{rgb}{0.0,0.0,0.0}
\definecolor{cfv0}{rgb}{1.0,1.0,1.0}
\definecolor{clv0}{rgb}{0.0,0.0,0.0}
\definecolor{cv1}{rgb}{0.0,0.0,0.0}
\definecolor{cfv1}{rgb}{1.0,1.0,1.0}
\definecolor{clv1}{rgb}{0.0,0.0,0.0}
\definecolor{cv2}{rgb}{0.0,0.0,0.0}
\definecolor{cfv2}{rgb}{1.0,1.0,1.0}
\definecolor{clv2}{rgb}{0.0,0.0,0.0}
\definecolor{cv3}{rgb}{0.0,0.0,0.0}
\definecolor{cfv3}{rgb}{1.0,1.0,1.0}
\definecolor{clv3}{rgb}{0.0,0.0,0.0}
\definecolor{cv4}{rgb}{0.0,0.0,0.0}
\definecolor{cfv4}{rgb}{1.0,1.0,1.0}
\definecolor{clv4}{rgb}{0.0,0.0,0.0}
\definecolor{cv5}{rgb}{0.0,0.0,0.0}
\definecolor{cfv5}{rgb}{1.0,1.0,1.0}
\definecolor{clv5}{rgb}{0.0,0.0,0.0}
\definecolor{cv6}{rgb}{0.0,0.0,0.0}
\definecolor{cfv6}{rgb}{1.0,1.0,1.0}
\definecolor{clv6}{rgb}{0.0,0.0,0.0}
\definecolor{cv7}{rgb}{0.0,0.0,0.0}
\definecolor{cfv7}{rgb}{1.0,1.0,1.0}
\definecolor{clv7}{rgb}{0.0,0.0,0.0}
\definecolor{cv8}{rgb}{0.0,0.0,0.0}
\definecolor{cfv8}{rgb}{1.0,1.0,1.0}
\definecolor{clv8}{rgb}{0.0,0.0,0.0}
\definecolor{cv9}{rgb}{0.0,0.0,0.0}
\definecolor{cfv9}{rgb}{1.0,1.0,1.0}
\definecolor{clv9}{rgb}{0.0,0.0,0.0}
\definecolor{cv10}{rgb}{0.0,0.0,0.0}
\definecolor{cfv10}{rgb}{1.0,1.0,1.0}
\definecolor{clv10}{rgb}{0.0,0.0,0.0}
\definecolor{cv11}{rgb}{0.0,0.0,0.0}
\definecolor{cfv11}{rgb}{1.0,1.0,1.0}
\definecolor{clv11}{rgb}{0.0,0.0,0.0}
\definecolor{cv12}{rgb}{0.0,0.0,0.0}
\definecolor{cfv12}{rgb}{1.0,1.0,1.0}
\definecolor{clv12}{rgb}{0.0,0.0,0.0}
\definecolor{cv13}{rgb}{0.0,0.0,0.0}
\definecolor{cfv13}{rgb}{1.0,1.0,1.0}
\definecolor{clv13}{rgb}{0.0,0.0,0.0}
\definecolor{cv14}{rgb}{0.0,0.0,0.0}
\definecolor{cfv14}{rgb}{1.0,1.0,1.0}
\definecolor{clv14}{rgb}{0.0,0.0,0.0}
\definecolor{cv15}{rgb}{0.0,0.0,0.0}
\definecolor{cfv15}{rgb}{1.0,1.0,1.0}
\definecolor{clv15}{rgb}{0.0,0.0,0.0}
\definecolor{cv16}{rgb}{0.0,0.0,0.0}
\definecolor{cfv16}{rgb}{1.0,1.0,1.0}
\definecolor{clv16}{rgb}{0.0,0.0,0.0}
\definecolor{cv17}{rgb}{0.0,0.0,0.0}
\definecolor{cfv17}{rgb}{1.0,1.0,1.0}
\definecolor{clv17}{rgb}{0.0,0.0,0.0}
\definecolor{cv18}{rgb}{0.0,0.0,0.0}
\definecolor{cfv18}{rgb}{1.0,1.0,1.0}
\definecolor{clv18}{rgb}{0.0,0.0,0.0}
\definecolor{cv19}{rgb}{0.0,0.0,0.0}
\definecolor{cfv19}{rgb}{1.0,1.0,1.0}
\definecolor{clv19}{rgb}{0.0,0.0,0.0}
\definecolor{cv20}{rgb}{0.0,0.0,0.0}
\definecolor{cfv20}{rgb}{1.0,1.0,1.0}
\definecolor{clv20}{rgb}{0.0,0.0,0.0}
\definecolor{cv21}{rgb}{0.0,0.0,0.0}
\definecolor{cfv21}{rgb}{1.0,1.0,1.0}
\definecolor{clv21}{rgb}{0.0,0.0,0.0}
\definecolor{cv8v9}{rgb}{0.0,0.0,0.0}
\definecolor{cv8v10}{rgb}{0.0,0.0,0.0}
\definecolor{cv12v13}{rgb}{0.0,0.0,0.0}
\definecolor{cv13v14}{rgb}{0.0,0.0,0.0}
\definecolor{cv13v19}{rgb}{0.0,0.0,0.0}
\definecolor{cv14v15}{rgb}{0.0,0.0,0.0}
\definecolor{cv14v20}{rgb}{0.0,0.0,0.0}
\definecolor{cv14v21}{rgb}{0.0,0.0,0.0}
\definecolor{cv14v0}{rgb}{0.0,0.0,0.0}
\definecolor{cv15v16}{rgb}{0.0,0.0,0.0}
\definecolor{cv15v1}{rgb}{0.0,0.0,0.0}
\definecolor{cv15v2}{rgb}{0.0,0.0,0.0}
\definecolor{cv15v3}{rgb}{0.0,0.0,0.0}
\definecolor{cv16v6}{rgb}{0.0,0.0,0.0}
\definecolor{cv16v7}{rgb}{0.0,0.0,0.0}
\definecolor{cv16v8}{rgb}{0.0,0.0,0.0}
\definecolor{cv16v17}{rgb}{0.0,0.0,0.0}
\definecolor{cv16v4}{rgb}{0.0,0.0,0.0}
\definecolor{cv16v5}{rgb}{0.0,0.0,0.0}
\definecolor{cv17v11}{rgb}{0.0,0.0,0.0}
\definecolor{cv17v18}{rgb}{0.0,0.0,0.0}
\Vertex[style={minimum size=1.0cm,draw=cv0,fill=cfv0,text=clv0,shape=circle},LabelOut=false,L=\hbox{$-1$},x=7.87cm,y=2.85cm]{v0}
\Vertex[style={minimum size=1.0cm,draw=cv1,fill=cfv1,text=clv1,shape=circle},LabelOut=false,L=\hbox{$1$},x=6.04cm,y=4.16cm]{v1}
\Vertex[style={minimum size=1.0cm,draw=cv2,fill=cfv2,text=clv2,shape=circle},LabelOut=false,L=\hbox{$1$},x=4.14cm,y=3.18cm]{v2}
\Vertex[style={minimum size=1.0cm,draw=cv3,fill=cfv3,text=clv3,shape=circle},LabelOut=false,L=\hbox{$1$},x=5.00cm,y=2.87cm]{v3}
\Vertex[style={minimum size=1.0cm,draw=cv4,fill=cfv4,text=clv4,shape=circle},LabelOut=false,L=\hbox{$-1$},x=3.7cm,y=6.11cm]{v4}
\Vertex[style={minimum size=1.0cm,draw=cv5,fill=cfv5,text=clv5,shape=circle},LabelOut=false,L=\hbox{$-1$},x=4.30cm,y=5.62cm]{v5}
\Vertex[style={minimum size=1.0cm,draw=cv6,fill=cfv6,text=clv6,shape=circle},LabelOut=false,L=\hbox{$-1$},x=2.36cm,y=5.85cm]{v6}
\Vertex[style={minimum size=1.0cm,draw=cv7,fill=cfv7,text=clv7,shape=circle},LabelOut=false,L=\hbox{$-1$},x=2.39cm,y=4.63cm]{v7}
\Vertex[style={minimum size=1.0cm,draw=cv8,fill=cfv8,text=clv8,shape=circle},LabelOut=false,L=\hbox{$1$},x=1.22cm,y=5.44cm]{v8}
\Vertex[style={minimum size=1.0cm,draw=cv9,fill=cfv9,text=clv9,shape=circle},LabelOut=false,L=\hbox{$1$},x=0.00cm,y=5.11cm]{v9}
\Vertex[style={minimum size=1.0cm,draw=cv10,fill=cfv10,text=clv10,shape=circle},LabelOut=false,L=\hbox{$1$},x=0.08cm,y=5.98cm]{v10}
\Vertex[style={minimum size=1.0cm,draw=cv11,fill=cfv11,text=clv11,shape=circle},LabelOut=false,L=\hbox{$1$},x=3.80cm,y=8.00cm]{v11}
\Vertex[style={minimum size=1.0cm,draw=cv12,fill=cfv12,text=clv12,shape=circle},LabelOut=false,L=\hbox{$1$},x=8.14cm,y=0.00cm]{v12}
\Vertex[style={minimum size=1.0cm,draw=cv13,fill=cfv13,text=clv13,shape=circle},LabelOut=false,L=\hbox{$1$},x=7.88cm,y=1.05cm]{v13}
\Vertex[style={minimum size=1.0cm,draw=cv14,fill=cfv14,text=clv14,shape=circle},LabelOut=false,L=\hbox{$-1$},x=6.78cm,y=2.39cm]{v14}
\Vertex[style={minimum size=1.0cm,draw=cv15,fill=cfv15,text=clv15,shape=circle},LabelOut=false,L=\hbox{$1$},x=4.99cm,y=3.75cm]{v15}
\Vertex[style={minimum size=1.0cm,draw=cv16,fill=cfv16,text=clv16,shape=circle},LabelOut=false,L=\hbox{$-1$},x=3.26cm,y=5.30cm]{v16}
\Vertex[style={minimum size=1.0cm,draw=cv17,fill=cfv17,text=clv17,shape=circle},LabelOut=false,L=\hbox{$1$},x=3.27cm,y=7.01cm]{v17}
\Vertex[style={minimum size=1.0cm,draw=cv18,fill=cfv18,text=clv18,shape=circle},LabelOut=false,L=\hbox{$1$},x=2.80cm,y=7.98cm]{v18}
\Vertex[style={minimum size=1.0cm,draw=cv19,fill=cfv19,text=clv19,shape=circle},LabelOut=false,L=\hbox{$1$},x=9.00cm,y=0.38cm]{v19}
\Vertex[style={minimum size=1.0cm,draw=cv20,fill=cfv20,text=clv20,shape=circle},LabelOut=false,L=\hbox{$-1$},x=6.56cm,y=1.49cm]{v20}
\Vertex[style={minimum size=1.0cm,draw=cv21,fill=cfv21,text=clv21,shape=circle},LabelOut=false,L=\hbox{$-1$},x=7.87cm,y=2.14cm]{v21}
\Edge[lw=0.05cm,style={color=cv8v9,},](v8)(v9)
\Edge[lw=0.05cm,style={color=cv8v10,},](v8)(v10)
\Edge[lw=0.05cm,style={color=cv12v13,},](v12)(v13)
\Edge[lw=0.05cm,style={color=cv13v14,},](v13)(v14)
\Edge[lw=0.05cm,style={color=cv13v19,},](v13)(v19)
\Edge[lw=0.05cm,style={color=cv14v15,},](v14)(v15)
\Edge[lw=0.05cm,style={color=cv14v20,},](v14)(v20)
\Edge[lw=0.05cm,style={color=cv14v21,},](v14)(v21)
\Edge[lw=0.05cm,style={color=cv14v0,},](v14)(v0)
\Edge[lw=0.05cm,style={color=cv15v16,},](v15)(v16)
\Edge[lw=0.05cm,style={color=cv15v1,},](v15)(v1)
\Edge[lw=0.05cm,style={color=cv15v2,},](v15)(v2)
\Edge[lw=0.05cm,style={color=cv15v3,},](v15)(v3)
\Edge[lw=0.05cm,style={color=cv16v6,},](v16)(v6)
\Edge[lw=0.05cm,style={color=cv16v7,},](v16)(v7)
\Edge[lw=0.05cm,style={color=cv16v8,},](v16)(v8)
\Edge[lw=0.05cm,style={color=cv16v17,},](v16)(v17)
\Edge[lw=0.05cm,style={color=cv16v4,},](v16)(v4)
\Edge[lw=0.05cm,style={color=cv16v5,},](v16)(v5)
\Edge[lw=0.05cm,style={color=cv17v11,},](v17)(v11)
\Edge[lw=0.05cm,style={color=cv17v18,},](v17)(v18)
\end{tikzpicture}
\caption{An $A$-bivalent tree associated with $\lambda_A=1.$}
\label{fig:Abivtree2}
\end{figure}


\noindent{\bf Acknowledgments.} The research of Leonardo de Lima is supported by CNPq grant 305988/2025-5.

\end{document}